\newtheorem{theorem}{Theorem}
\newtheorem{lemma}{Lemma}
\newtheorem{false statement}{False statement}
\theoremstyle{definition}
\newtheorem{claim}{Claim}
\newtheorem{case}{Case}
\newcommand{\de}{{\rm def}}
\begin{document}

\title{\bf\Large On traceability of claw-$o_{-1}$-heavy graphs
\thanks{Supported by NSFC (No.~11271300) and the Doctorate Foundation of Northwestern
Polytechnical University (No. cx201202)}}

\date{}

\author{Binlong Li and Shenggui Zhang\thanks{Corresponding
author. E-mail address: sgzhang@nwpu.edu.cn (S. Zhang).}\\[2mm]
\small Department of Applied Mathematics,
\small Northwestern Polytechnical University,\\
\small Xi'an, Shaanxi 710072, P.R.~China} \maketitle

\begin{abstract}
A graph is called traceable if it contains a Hamilton path, i.e., a
path passing through all its vertices. Let $G$ be a graph on $n$
vertices. $G$ is called claw-$o_{-1}$-heavy if every induced claw
($K_{1,3}$) of $G$ has a pair of nonadjacent vertices with degree
sum at least $n-1$ in $G$. In this paper we show that a
claw-$o_{-1}$-heavy graph $G$ is traceable if we impose certain
additional conditions on $G$ involving forbidden induced subgraphs.

\medskip
\noindent {\bf Keywords:} Traceable graphs; Claw-$o_{-1}$-heavy
graphs; Forbidden subgraphs
\smallskip
\end{abstract}

\section{Introduction}

We use Bondy and Murty \cite{Bondy_Murty} for terminology and
notation not defined here and consider finite simple graphs only.

A graph $G$ is \emph{traceable} if it contains a \emph{Hamilton
path}, i.e., a path containing all vertices of $G$; and it is
\emph{hamiltonian} if it contains a \emph{Hamilton cycle}, i.e., a
cycle containing all vertices of $G$.

Here we first shortly describe some types of sufficient conditions
for the existence of Hamilton cycles, one of which have been popular
research areas for a considerable time, namely \emph{forbidden
subgraph conditions}. Before we do so, we need to introduce some
additional terminology.

Let $G$ be a graph. If a subgraph $G'$ of $G$ contains all edges
$xy\in E(G)$ with $x,y\in V(G')$, then $G'$ is called an
\emph{induced subgraph} of $G$ (or a subgraph of $G$ induced by
$V(G')$). For a given graph $H$, we say that $G$ is \emph{$H$-free}
if $G$ does not contain an induced subgraph isomorphic to $H$. For a
family $\mathcal{H}$ of graphs, $G$ is called
\emph{$\mathcal{H}$-free} if $G$ is $H$-free for every
$H\in\mathcal{H}$. If $G$ is $H$-free, then $H$ is called a
\emph{forbidden subgraph} of $G$.

The graph $K_{1,3}$ is called a \emph{claw}, in which the only
vertex of degree 3 is called the \emph{center} and the other
vertices are the \emph{end vertices}.

Forbidden subgraph conditions for hamiltonicity have been known
since the early 1980s, but Bedrossian was the first to study the
characterization of all pairs of forbidden graphs for hamiltonicity
in his PhD thesis of 1991 \cite{Bedrossian}. Before we state his
result, we first note that forbidding $K_1$ is absurd because we
always assume a graph has a nonempty vertex set. Moreover, we note
that a $K_2$-free graph is an empty graph (contains no edges), so it
is trivially non-hamiltonian. In this paper, we therefore assume
that all the forbidden subgraphs we will consider have at least
three vertices. Finally, we note that every connected $P_3$-free
graph is complete, and then is trivially hamiltonian (if it has at
least 3 vertices), and it is in fact easy to show that $P_3$ is the
only connected graph $S$ such that every 2-connected $S$-free graph
is hamiltonian. The next result of Bedrossian deals with pairs of
forbidden subgraphs, excluding $P_3$.

\begin{theorem}[Bedrossian \cite{Bedrossian}]
Let $R$ and $S$ be connected graphs with $R,S\neq P_{3}$ and let $G$
be a 2-connected graph. Then $G$ being $\{R,S\}$-free implies $G$ is
hamiltonian if and only if (up to symmetry) $R=K_{1,3}$ and $S=P_4$,
$P_5$, $P_6$, $C_3$, $Z_1$, $Z_2$, $B$, $N$ or $W$.
\end{theorem}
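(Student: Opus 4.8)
The statement is an \emph{iff} characterization, so the plan is to prove the two directions separately: \emph{sufficiency} (each listed pair forces hamiltonicity of $2$-connected graphs) and \emph{necessity} (no other pair does).

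For sufficiency I would first use the induced-subgraph order among the candidate graphs to cut down the work: since $P_4,P_5$ are induced subgraphs of $P_6$, since $C_3\subseteq Z_1\subseteq Z_2$ and $C_3\subseteq Z_1\subseteq B\subseteq N$, forbidding a smaller graph is logically stronger than forbidding a larger one, so being $\{K_{1,3},S\}$-free implies being $\{K_{1,3},S'\}$-free for the corresponding maximal $S'$. Hence it suffices to prove that every $2$-connected $\{K_{1,3},S'\}$-free graph is hamiltonian for the maximal members $S'\in\{P_6,Z_2,N,W\}$. For each of these I would argue with a longest cycle $C$: assuming $C$ is not a Hamilton cycle, take $v\notin V(C)$ and use $2$-connectivity to obtain two internally disjoint $v$--$C$ paths ending at distinct vertices of $C$; at each such attachment vertex $x$, claw-freeness forces an edge among the two cycle-neighbours of $x$ and $v$, and the absence of the second forbidden subgraph $S'$ then supplies enough further local structure to reroute $C$ through $v$, contradicting the maximality of $C$. (A cleaner but heavier alternative is to pass to the Ryj\'a\v{c}ek closure and reduce to line graphs of triangle-free graphs.)

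For necessity I would argue contrapositively, producing for every disallowed pair a $2$-connected non-hamiltonian $\{R,S\}$-free graph; equivalently, every $2$-connected non-hamiltonian graph must contain an induced $R$ or an induced $S$. First I would force (up to symmetry) $R=K_{1,3}$. The graph $K_{2,3}$ is $2$-connected, non-hamiltonian and $P_4$-free, and its only connected induced subgraphs on at least three vertices are $P_3,K_{1,3},C_4$ and $K_{2,3}$ itself; hence one of $R,S$ lies in $\{K_{1,3},C_4,K_{2,3}\}$. To exclude $C_4$ and $K_{2,3}$ I would introduce a family of triangle-rich, $C_4$-free and $K_{2,3}$-free $2$-connected non-hamiltonian graphs (generalized theta- and net-type graphs), whose common connected non-$P_3$ induced subgraph is only the claw; combined with a separate treatment of the cases where $R$ contains a cycle or is a long path, this pins $R=K_{1,3}$. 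With $R=K_{1,3}$ fixed, I would determine $S$ using \emph{claw-free} $2$-connected non-hamiltonian test graphs, conveniently obtained as line graphs of suitable multigraphs together with a few small exceptional graphs; each such graph, being claw-free, must contain an induced $S$, and the connected graphs embeddable in all of them are precisely the induced subgraphs of $P_6,Z_2,B,N,W$. Hence $S$ must belong to the stated list, since any connected $S$ outside it fails to embed in one of the test graphs.

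I expect the main obstacle to be twofold. On the sufficiency side the cycle-extension analysis for the largest pairs, especially $\{K_{1,3},N\}$ and $\{K_{1,3},P_6\}$, requires a delicate case split according to how the two attachments of $v$ sit on $C$, and keeping every re-routing claw-free is where most of the care goes. On the necessity side the real work lies in designing the extremal families so that they are simultaneously provably non-hamiltonian and provably free of every rival candidate subgraph, and in checking that their common induced subgraphs collapse to exactly the stated list; this bookkeeping, rather than any single clever idea, is the crux.
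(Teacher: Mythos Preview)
The paper does not prove this statement at all: Theorem~1 is Bedrossian's characterization, quoted from \cite{Bedrossian} as background, and no argument for it appears anywhere in the text. There is therefore nothing in the paper to compare your proposal against.

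Your outline is a plausible sketch of how Bedrossian's result is actually established (reduce sufficiency to the maximal graphs in the induced-subgraph order and run a longest-cycle argument; for necessity, build claw-free $2$-connected non-hamiltonian test families and intersect their induced-subgraph posets), but since the paper neither states nor hints at any of this, your proposal cannot be said to match or diverge from it. If the goal is to supply a proof the paper omits, you should be aware that the sufficiency direction for $\{K_{1,3},P_6\}$ and $\{K_{1,3},W\}$ is substantially more intricate than your paragraph suggests, and the necessity side requires a carefully curated infinite list of extremal graphs rather than ``a few small exceptional graphs''; as written, your sketch underestimates the bookkeeping in both halves.
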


\begin{center}
\begin{picture}(360,200)
\thicklines

\put(5,140){\multiput(20,30)(50,0){5}{\put(0,0){\circle*{6}}}
\put(20,30){\line(1,0){100}} \put(170,30){\line(1,0){50}}
\qbezier[4](120,30)(145,30)(170,30) \put(18,35){$v_1$}
\put(68,35){$v_2$} \put(118,35){$v_3$} \put(168,35){$v_{i-1}$}
\put(218,35){$v_i$} \put(115,10){$P_i$}}

\put(265,125){\put(20,30){\circle*{6}} \put(70,30){\circle*{6}}
\put(45,55){\circle*{6}} \put(20,30){\line(1,0){50}}
\put(20,30){\line(1,1){25}} \put(70,30){\line(-1,1){25}}
\put(40,10){$C_3$}}

\put(0,0){\put(20,30){\circle*{6}} \put(70,30){\circle*{6}}
\multiput(45,55)(0,25){4}{\circle*{6}} \put(20,30){\line(1,0){50}}
\put(20,30){\line(1,1){25}} \put(70,30){\line(-1,1){25}}
\put(45,55){\line(0,1){25}} \put(45,105){\line(0,1){25}}
\qbezier[4](45,80)(45,92.5)(45,105) \put(50,78){$v_1$}
\put(50,103){$v_{i-1}$} \put(50,128){$v_i$} \put(40,10){$Z_i$}}

\put(90,0){\put(45,40){\circle*{6}} \put(45,40){\line(-1,1){25}}
\put(45,40){\line(1,1){25}} \put(20,65){\line(1,0){50}}
\multiput(20,65)(50,0){2}{\multiput(0,0)(0,30){2}{\put(0,0){\circle*{6}}}
\put(0,0){\line(0,1){30}}} \put(25,10){$B$ (Bull)}}

\put(180,0){\multiput(20,30)(50,0){2}{\multiput(0,0)(0,30){2}{\put(0,0){\circle*{6}}}
\put(0,0){\line(0,1){30}}}
\multiput(45,85)(0,30){2}{\put(0,0){\circle*{6}}}
\put(45,85){\line(0,1){30}} \put(20,60){\line(1,0){50}}
\put(20,60){\line(1,1){25}} \put(70,60){\line(-1,1){25}}
\put(25,10){$N$ (Net)}}

\put(270,0){\put(45,30){\circle*{6}} \put(20,55){\line(1,0){50}}
\put(45,30){\line(1,1){25}} \put(45,30){\line(-1,1){25}}
\multiput(20,55)(0,30){2}{\put(0,0){\circle*{6}}}
\multiput(70,55)(0,30){3}{\put(0,0){\circle*{6}}}
\put(20,55){\line(0,1){30}} \put(70,55){\line(0,1){60}}
\put(10,10){$W$ (Wounded)}}

\end{picture}

\small{Fig. 1. Graphs $P_i,C_3,Z_i,B,N$ and $W$}
\end{center}

A well-known sufficient condition for a graph to be hamiltonian was
given by Ore \cite{Ore} in 1960, and was called \emph{degree sum
condition}. It states that a graph $G$ on $n\ge 3$ vertices is
hamiltonian if every pair of nonadjacent vertices of $G$ has degree
sum at least $n$.

In an earlier paper \cite{Li_Ryjacek_Wang_Zhang}, we combine the two
types of conditions, i.e., to restrict the degree sum condition to
certain subgraphs, to obtain a new type of conditions for
hamiltonicity that we generally address as \emph{heavy subgraph
conditions}. Before we present the results of it, we need a few more
definitions.

Let $G$ be a graph on $n$ vertices, and let $G'$ be an induced
subgraph of $G$. We say that $G'$ is \emph{heavy} in $G$ if there
are two nonadjacent vertices in $V(G')$ with degree sum at least $n$
in $G$. For a given fixed graph $H$, the graph $G$ is called
\emph{$H$-heavy} if every induced subgraph of $G$ isomorphic to $H$
is heavy. For a family $\mathcal{H}$ of graphs, $G$ is called
\emph{$\mathcal{H}$-heavy} if $G$ is $H$-heavy for every
$H\in\mathcal{H}$. Note that an $H$-free graph is also $H$-heavy;
and if $H_1$ is an induced subgraph of $H_2$, then an $H_1$-free
($H_1$-heavy) graph is also $H_2$-free ($H_2$-heavy).

For hamiltonicity we obtained the following counterpart of
Bedrossian's Theorem (it was also shown in
\cite{Li_Ryjacek_Wang_Zhang} that the only connected graph $S$ such
that every 2-connected $S$-free graph is hamiltonian is $P_3$).

\begin{theorem}[Li et al. \cite{Li_Ryjacek_Wang_Zhang}]
Let $R$ and $S$ be connected graphs with $R,S\neq P_3$ and let $G$
be a 2-connected graph. Then $G$ being $\{R,S\}$-heavy implies $G$
is hamiltonian if and only if (up to symmetry) $R=K_{1,3}$ and
$S=P_4$, $P_5$, $C_3$, $Z_1$, $Z_2$, $B$, $N$ or $W$.
\end{theorem}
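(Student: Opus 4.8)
\medskip
\noindent\textbf{Plan of proof.}
The statement is an equivalence, and I would handle its two directions by entirely different means. For the necessity (``only if'') direction the main engine is already available in the excerpt: since every $H$-free graph is $H$-heavy, the assertion ``$\{R,S\}$-heavy $\Rightarrow$ hamiltonian for all $2$-connected graphs'' is formally stronger than ``$\{R,S\}$-free $\Rightarrow$ hamiltonian''. Hence every admissible pair $\{R,S\}$ must already occur in Bedrossian's Theorem, so up to symmetry $R=K_{1,3}$ and $S\in\{P_4,P_5,P_6,C_3,Z_1,Z_2,B,N,W\}$. As the target list is exactly this set with $P_6$ deleted, the whole necessity direction collapses to one task: producing a single $2$-connected, non-hamiltonian graph that is $\{K_{1,3},P_6\}$-heavy.

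For that counterexample the cleanest way to secure claw-heaviness is to make the graph claw-free, so that the claw condition holds vacuously; the task then reduces to finding a claw-free, $2$-connected, non-hamiltonian graph all of whose induced copies of $P_6$ are heavy. By Bedrossian's Theorem such a graph must genuinely contain induced $P_6$'s (claw-free plus $P_6$-free would already force hamiltonicity), so the degrees have to be tuned so that some nonadjacent pair --- conveniently, the two ends --- of every induced $P_6$ has degree sum at least $n$. A natural source of claw-free, $2$-connected, non-hamiltonian graphs is line graphs of $2$-edge-connected graphs possessing no dominating closed trail, and I would exhibit one explicitly, with sizes chosen so that $2$-connectedness, non-hamiltonicity, and the heaviness of every induced $P_6$ can be checked by hand. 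Pinning down the precise graph and verifying the finitely many types of induced $P_6$ is the only genuinely delicate part of this direction.

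The sufficiency (``if'') direction carries the real weight, and I would first shrink the list of pairs. By the monotonicity noted in the excerpt --- if $H_1$ is an induced subgraph of $H_2$ then every $H_1$-heavy graph is $H_2$-heavy --- a proof of ``$\{K_{1,3},H_2\}$-heavy $\Rightarrow$ hamiltonian'' immediately yields the statement for every induced subgraph $H_1$ of $H_2$. Writing the graph $W$ of Fig.~1 as a triangle $pqr$ carrying a pendant vertex $q'$ at $q$ and a pendant path $r\,r'\,r''$ at $r$, one checks that $q'qrr'r''$ induces $P_5$, that $pqrr'r''$ induces $Z_2$, and that $pqrq'r'$ induces the bull $B$, while $C_3,Z_1$ and $P_4$ sit inside $W$ as well; thus $P_4,P_5,C_3,Z_1,Z_2$ and $B$ are all induced subgraphs of $W$, whereas the net $N$ is an induced subgraph of no other graph in the list. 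The inclusion-maximal pairs are therefore exactly $\{K_{1,3},W\}$ and $\{K_{1,3},N\}$, and it suffices to prove that every $2$-connected graph that is $\{K_{1,3},W\}$-heavy, respectively $\{K_{1,3},N\}$-heavy, is hamiltonian.

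For these two cases I would run the classical longest-cycle argument. Suppose $G$ is $2$-connected and $\{K_{1,3},S\}$-heavy with $S\in\{N,W\}$ but has no Hamilton cycle, and let $C$ be a longest cycle; then $C$ is not spanning, so some component $D$ of $G-V(C)$ is nonempty and, by $2$-connectedness, has at least two neighbours on $C$. Fixing $x\in D$ and analysing $x$ together with the neighbours of $D$ on $C$ and their immediate successors and predecessors along $C$, each local configuration either exhibits an induced claw centred at a cycle vertex or can be completed, using the arcs of $C$, to an induced copy of $S$; heaviness then supplies two nonadjacent vertices of degree sum at least $n$, and a Bondy--Chv\'atal exchange across $C$ yields a longer cycle, contradicting the maximality of $C$. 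The main obstacle is precisely the feature distinguishing the heavy setting from Bedrossian's: a forbidden-subgraph hypothesis lets one delete a configuration outright, whereas heaviness only guarantees that \emph{some} nonadjacent pair inside the subgraph is heavy, while the cycle-extension step needs a \emph{specific} pair. One must therefore control which pair heaviness returns and arrange that it is the pair whose addition lengthens $C$, which typically forces a preliminary appeal to claw-heaviness (to create the adjacencies making the intended copy of $S$ induced) before $S$-heaviness can be invoked. I expect the bulk of the case analysis to lie here, with the six-vertex graphs $N$ and $W$ --- whose induced copies must be assembled from vertices at several prescribed positions around $C$ --- being the heaviest bookkeeping.
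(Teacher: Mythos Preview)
The theorem you are discussing is not proved in this paper: it is quoted from \cite{Li_Ryjacek_Wang_Zhang} as background and is used without proof. So there is no ``paper's own proof'' to compare against, and your proposal is doing work the present manuscript does not ask of you.

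That said, your outline is broadly the standard route and matches what the cited source does. The necessity reduction via Bedrossian's list plus a single $\{K_{1,3},P_6\}$-heavy counterexample is exactly what the text alludes to (``one can find an example in \cite{Li_Ryjacek_Wang_Zhang}\ldots''). Your monotonicity reduction of the sufficiency to the two maximal pairs $\{K_{1,3},N\}$ and $\{K_{1,3},W\}$ is correct: all of $C_3,P_4,P_5,Z_1,Z_2,B$ embed inducedly in $W$, while $N$ is incomparable with $W$. Where your sketch is thin is the sufficiency itself. A bare longest-cycle argument with a single Bondy--Chv\'atal exchange is not enough in the heavy setting; the actual proofs for $N$ and $W$ in \cite{Li_Ryjacek_Wang_Zhang} pass through a closure operation adapted to claw-heavy graphs and a fairly intricate sequence of structural lemmas controlling which nonadjacent pair in an induced $N$ or $W$ is heavy, exactly the obstacle you flag in your last paragraph. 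Treat that paragraph as the real content of the proof rather than a loose end: the bookkeeping you anticipate is substantial and cannot be waved through.
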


Comparing the two theorems, we note that the claw $K_{1,3}$ is
always one of the heavy pairs, and $P_6$ is the only graph that
appears in the list of Bedrossian's Theorem but is missing here. One
can find an example in \cite{Li_Ryjacek_Wang_Zhang} showing that
$P_6$ has to be excluded in the above theorem.

Now we consider the subgraph conditions for traceability of graphs.
First, as pointed out before, if a graph is connected and
$P_3$-free, then it is a complete graph and of course is traceable.
In fact, $P_3$ is the only connected graph $S$ such that every
connected $S$-free graphs is traceable. The following theorem on
forbidden pair of subgraphs for traceability is well known.

\begin{theorem}[Duffus, Jacobson and Gould \cite{Duffus_Jacobson_Gould}]
If $G$ is a connected $\{K_{1,3},N\}$-free graph, then $G$ is
traceable.
\end{theorem}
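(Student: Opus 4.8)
The plan is to argue by contradiction using a longest path. Suppose $G$ is connected and $\{K_{1,3},N\}$-free but not traceable, and let $P=v_1v_2\cdots v_k$ be a longest path in $G$; since $G$ is not traceable, $k<n=|V(G)|$. Because $P$ is longest, both ends are \emph{saturated}: every neighbour of $v_1$ and of $v_k$ lies on $P$, since otherwise $P$ would extend. First I would locate the off-path structure: as $G$ is connected and $V(P)\neq V(G)$, there is a vertex $u\notin V(P)$ adjacent to some $v_i\in V(P)$, and by saturation of the ends one has $2\le i\le k-1$.

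The first key step is to produce a triangle carrying a pendant. If $u\sim v_{i-1}$ or $u\sim v_{i+1}$, then inserting $u$ between $v_i$ and that neighbour yields a path on $k+1$ vertices, contradicting maximality; hence $u\not\sim v_{i-1},v_{i+1}$. Now the vertices $v_{i-1},v_{i+1},u$ together with their common neighbour $v_i$ would induce a claw unless $v_{i-1}\sim v_{i+1}$, so claw-freeness forces the edge $v_{i-1}v_{i+1}$. Thus $\{v_{i-1},v_i,v_{i+1}\}$ is a triangle and $u$ is a pendant attached at $v_i$. A short extension argument rules out $i=2$ and $i=k-1$: for instance, if $i=2$ then $u\,v_2\,v_1\,v_3\,v_4\cdots v_k$ (using $v_1\sim v_3$) is a path on $k+1$ vertices. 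Hence $v_{i-2}$ and $v_{i+2}$ both exist and are the natural candidate pendants at $v_{i-1}$ and $v_{i+1}$, so that $\{v_{i-2},v_{i-1},v_i,v_{i+1},v_{i+2},u\}$ is poised to induce a net $N$.

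The heart of the proof, and the step I expect to be the main obstacle, is to convert this configuration into a contradiction. Net-freeness says the six vertices above cannot induce $N$, so at least one additional chord among $v_{i-2}v_i$, $v_{i-2}v_{i+1}$, $v_{i-2}u$, $v_{i-2}v_{i+2}$, $uv_{i+2}$, $v_iv_{i+2}$, $v_{i-1}v_{i+2}$ must be present (the edges $uv_{i-1}$ and $uv_{i+1}$ being already excluded). The remaining task is to show that each such chord --- possibly in combination, and after re-routing $P$ across the triangle via the edge $v_{i-1}v_{i+1}$ --- allows either a direct insertion of $u$ or a rotation producing a path on more than $k$ vertices, again contradicting maximality. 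Managing these rotation and insertion cases is delicate: a single chord such as $v_{i-2}v_i$ need not by itself extend $P$, so one is led to track how the chords interact and, if necessary, to choose $P$ and $u$ extremally (for example with $i$ as small as possible) in order to force a genuine extension. Carrying this analysis through, every alternative collapses either to a longer path or to a forbidden claw, so no longest path can omit a vertex, and therefore $G$ is traceable.
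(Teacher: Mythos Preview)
The paper does not supply a proof of this theorem; it is quoted from \cite{Duffus_Jacobson_Gould} as background, so there is nothing in the paper to compare your argument against.

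Judged on its own, your outline is a natural start but stops short of a proof at precisely the point you yourself flag as ``the main obstacle.'' Up through the formation of the triangle $v_{i-1}v_iv_{i+1}$ with pendant $u$, and the exclusion of $i\in\{2,k-1\}$, everything is correct; one can also dispose of the chords $uv_{i-2}$ and $uv_{i+2}$ immediately, since for example $uv_{i-2}\in E(G)$ yields the longer path $v_1\cdots v_{i-2}\,u\,v_i\,v_{i-1}\,v_{i+1}\cdots v_k$. The real difficulty lies with the remaining five possible chords $v_{i-2}v_i$, $v_{i-2}v_{i+1}$, $v_{i-2}v_{i+2}$, $v_iv_{i+2}$, $v_{i-1}v_{i+2}$. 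You assert that ``every alternative collapses,'' but, as you yourself observe, a chord such as $v_{i-2}v_i$ produces only another longest path $v_1\cdots v_{i-2}v_iv_{i-1}v_{i+1}\cdots v_k$, not a longer one, and the diagonal $v_{i-2}v_{i+2}$ gives no evident reroute at all. Without a carefully chosen extremal parameter on the triple $(P,u,i)$, together with a verification that each rotation strictly improves it, the argument can cycle among longest paths indefinitely and never reaches a contradiction. The classical proofs close this gap either by such an extremal selection or by first arranging, via rotations, that an end vertex $v_1$ has $N[v_1]$ inducing a clique, after which a net is forced at the end-triangle. As it stands, your final sentence is an assertion rather than an argument.
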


Obviously if $H$ is an induced subgraph of $N$, then $\{K_{1,3},H\}$
will also solve this problem. Faudree et al. proved these are the
only forbidden pairs with such property.

\begin{theorem}[Faudree and Gould \cite{Faudree_Gould}]
Let $R$ and $S$ be connected graphs with $R,S\neq P_3$ and let $G$
be a connected graph. Then $G$ being $\{R,S\}$-free implies $G$ is
traceable if and only if (up to symmetry) $R=K_{1,3}$ and
$S=C_3,P_4,Z_1,B$ or $N$.
\end{theorem}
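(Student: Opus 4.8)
The plan is to prove the two implications separately, letting the theorem of Duffus, Jacobson and Gould carry the sufficiency and two explicit non-traceable graphs carry the necessity. The fact that unifies everything, which I would record first, is that the connected induced subgraphs of the net $N$ having at least three vertices and different from $P_3$ are exactly $C_3$, $P_4$, $Z_1$, $B$ and $N$ itself. Both directions then flow from this single observation.

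For the \emph{sufficiency}, let $G$ be connected and $\{K_{1,3},S\}$-free with $S\in\{C_3,P_4,Z_1,B\}$. Each such $S$ is an induced subgraph of $N$, so by the remark in the introduction (an $H_1$-free graph is $H_2$-free whenever $H_1$ is an induced subgraph of $H_2$) the graph $G$ is also $N$-free, hence $\{K_{1,3},N\}$-free, and the theorem of Duffus, Jacobson and Gould yields that $G$ is traceable. The remaining case $S=N$ is that theorem verbatim. Thus the whole ``if'' part reduces to exhibiting $C_3,P_4,Z_1$ and $B$ inside the net.

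For the \emph{necessity}, assume that $\{R,S\}$-freeness implies traceability, where $R,S$ are connected, have at least three vertices, and satisfy $R,S\neq P_3$. I would probe the hypothesis with just two non-traceable graphs. \emph{First}, the claw $K_{1,3}$ is itself non-traceable, since its three leaves would all have to be endpoints of a Hamilton path; so $K_{1,3}$ cannot be $\{R,S\}$-free and must contain an induced $R$ or $S$. But the only connected induced subgraph of $K_{1,3}$ on at least three vertices other than $P_3$ is $K_{1,3}$ itself, whence one of $R,S$, say $R$, equals $K_{1,3}$. \emph{Second}, the net $N$ is non-traceable (the same three-leaf obstruction) and claw-free, so it contains no induced $K_{1,3}=R$; being non-traceable it must then contain an induced $S$, and by the unifying fact $S\in\{C_3,P_4,Z_1,B,N\}$. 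This is precisely the asserted list, up to the symmetry between $R$ and $S$.

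The only step demanding genuine care is the unifying enumeration of the induced subgraphs of $N$, which I would carry out by deleting vertices of the net: deleting a leaf gives the bull $B$, deleting one further vertex gives $P_4$ or $Z_1$, the three triangle vertices give $C_3$, and no induced $P_5$ or $K_{1,3}$ can survive because the triangle of $N$ forces its three central vertices to be pairwise adjacent. Everything else is bookkeeping; in particular the triangle-freeness of the claw against the triangle-richness of $C_3,Z_1,B,N$ makes it transparent that each witness avoids the correct member of the pair. It is worth noting that the real analytic weight of the statement — that claw-free, net-free connected graphs are traceable — is supplied wholesale by the Duffus--Jacobson--Gould theorem, which is exactly why the argument above can be kept this short.
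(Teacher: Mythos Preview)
Your argument is correct, but note that the paper does not actually prove this theorem: it is quoted from Faudree and Gould \cite{Faudree_Gould} as background and never given a proof in the text, so there is no ``paper's own proof'' to compare against.

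That said, what you wrote is the standard argument and is sound. Sufficiency is an immediate reduction to the Duffus--Jacobson--Gould theorem via the observation that $C_3,P_4,Z_1,B$ all sit inside $N$ as induced subgraphs. Necessity is obtained by testing the hypothesis on the two small non-traceable graphs $K_{1,3}$ and $N$: the first forces (up to symmetry) $R=K_{1,3}$, and the second, being claw-free, forces $S$ to be a connected induced subgraph of $N$ on at least three vertices other than $P_3$, which is exactly the list $\{C_3,P_4,Z_1,B,N\}$. Your enumeration of those subgraphs and your check that $N$ is claw-free are both correct.
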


A counterpart of Ore's Theorem shows that every graph on $n$
vertices in which every pair of nonadjacent vertices has degree sum
at least $n-1$, is traceable. The main object of this paper, is to
restrict the degree sum condition to certain subgraphs, to obtain a
new type of conditions for traceability. We first give some
definitions.

Let $G$ be a graph on $n$ vertices and $G'$ an induced subgraph of
$G$. We say that $G'$ is $o_{-1}$-\emph{heavy} if there are two
nonadjacent vertices in $V(G')$ with degree sum at least $n-1$ in
$G$. For a given graph $H$, $G$ is called $H$-$o_{-1}$-\emph{heavy}
if every induced subgraph of $G$ isomorphic to $H$ is
$o_{-1}$-heavy. For a family $\mathcal{H}$ of graphs, $G$ is called
$\mathcal{H}$-$o_{-1}$-\emph{heavy} if $G$ is $H$-$o_{-1}$-heavy for
every $H\in\mathcal{H}$.

In this paper, instead of $K_{1,3}$-free ($K_{1,3}$-heavy,
$K_{1,3}$-$o_{-1}$-heavy), we use the terminology claw-free
(claw-heavy, claw-$o_{-1}$-heavy).

Now we consider the following question: for which graph $S$ (which
pair of graphs $R,S$), a connected graph is $S$-$o_{-1}$-heavy
($\{R,S\}$-$o_{-1}$-heavy) implies it is traceable?

First, we will prove in Section 4 that every connected
$P_3$-$o_{-1}$-heavy graph is traceable.

\begin{theorem}
If $G$ is a connected $P_3$-$o_{-1}$-heavy graph, then $G$ is
traceable.
\end{theorem}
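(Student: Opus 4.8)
The plan is to argue by contradiction using a longest path. Assume $G$ has $n\ge 2$ vertices (the case $n=1$ being trivial) and is not traceable, and let $P=v_1v_2\cdots v_p$ be a longest path in $G$; by assumption $p<n$. Since $P$ is longest, $N(v_1)\cup N(v_p)\subseteq V(P)$, and moreover $v_1\not\sim v_p$: otherwise $v_1v_2\cdots v_pv_1$ is a cycle through $V(P)$, and as $G$ is connected with $p<n$ some vertex off this cycle attaches to it, yielding a path longer than $P$, a contradiction.

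First I would establish the endpoint degree bound. Writing $S=\{i:v_i\in N(v_1)\}$ and $T=\{i:v_i\in N(v_p)\}$, both contained in $\{2,\ldots,p-1\}$, the standard crossing observation applies: if $v_1\sim v_{i+1}$ and $v_p\sim v_i$ simultaneously, then $v_1v_2\cdots v_iv_pv_{p-1}\cdots v_{i+1}v_1$ is again a spanning cycle of $V(P)$, giving the same contradiction. Hence $S\cap(T+1)=\emptyset$, and since $S$ and $T+1$ both lie in $\{2,\ldots,p\}$ we obtain $d(v_1)+d(v_p)=|S|+|T|\le p-1\le n-2$. This already forces the key structural fact: as $d(v_1)+d(v_p)\le n-2<n-1$ and $v_1\not\sim v_p$, the pair $\{v_1,v_p\}$ cannot have a common neighbour, for a common neighbour $z$ would make $v_1zv_p$ an induced $P_3$ whose two nonadjacent vertices have degree sum at most $n-2$, violating the $P_3$-$o_{-1}$-heaviness of $G$. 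Thus $N(v_1)\cap N(v_p)=\emptyset$, and the same reasoning applies verbatim to the endpoints of \emph{every} longest path.

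The main obstacle is to convert this into a contradiction, and here the locality of the hypothesis bites: the degree bound is pinned to the specific endpoints $v_1,v_p$, yet these are exactly the vertices the hypothesis fails to constrain, being nonadjacent and (as just shown) at distance at least $3$. Global shortcuts do not survive this locality: adjoining a vertex adjacent to all of $V(G)$, so that traceability of $G$ becomes hamiltonicity of the enlarged graph, collapses every nonadjacent pair of $G$ into a pair at distance $2$, and would therefore demand the degree bound for \emph{all} nonadjacent pairs — the full Ore-type condition — which is precisely what we lack. My plan is instead to use Pós\-a-type rotations together with connectivity. Fixing $v_p$ and rotating from $v_1$ produces a family of longest paths whose left endpoints form a set $X\ni v_1$ with $N(x)\subseteq V(P)$ for all $x\in X$; by the previous paragraph applied to each such path, $N(x)\cap N(v_p)=\emptyset$ for every $x\in X$, and no $x\in X$ has a neighbour off $P$ (else that path extends).

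The final step is to bring in a component $D$ of $G-V(P)$, which exists since $p<n$ and which attaches to $P$ only at internal vertices. The aim is to show that connectivity forces $D$ to attach at, or next to, a rotation endpoint, so that the corresponding longest path extends into $D$, contradicting maximality. The cleanest sub-case, which illustrates the mechanism, is $D=\{w\}$: then $N(w)$ lies on $P$ with no two consecutive vertices (otherwise $w$ could be inserted), forcing $d(w)<(n-1)/2$, while $d(v_1)+d(v_p)\le n-2$ makes some endpoint, say $v_1$, satisfy $d(v_1)<(n-1)/2$ as well; any common neighbour of $w$ and $v_1$ then gives an induced $P_3$ with degree sum below $n-1$, a contradiction. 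Carrying out this rotation-and-attachment analysis in general — controlling how $N(X)$, $N(v_p)$ and the attachments of $D$ interact, and splitting on whether the components off $P$ are trivial — is the technical heart of the argument and the step I expect to require the most care.
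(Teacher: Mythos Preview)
Your approach has a genuine gap and is substantially harder than needed. Even the illustrative sub-case $D=\{w\}$ does not close: deducing that $w$ and $v_1$ can have no common neighbour is not in itself a contradiction --- nothing forces such a common neighbour to exist (for instance $w$ may attach only at some interior $v_i$ far from $N(v_1)$). More seriously, you explicitly leave the general rotation-and-attachment analysis undone, and it is not at all clear that P\'osa rotations together with the purely local $P_3$-hypothesis can be pushed through without a further idea; the difficulty you identify (the hypothesis says nothing about the endpoint pair $v_1,v_p$) is real and does not dissolve under rotation alone.

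The paper's proof is a one-paragraph argument that sidesteps all of this by focusing on the \emph{right} pair of vertices. Rather than the endpoints $v_1,v_p$, take any off-path vertex $x$ adjacent to some $v_i$ on $P$. Then $xv_{i+1}\notin E(G)$ (otherwise insert $x$), so $xv_iv_{i+1}$ is an induced $P_3$ and the hypothesis immediately gives $d(x)+d(v_{i+1})\ge n-1$. Hence the sequence $v_1\cdots v_i\,x\,v_{i+1}\cdots v_p$ has every consecutive pair either adjacent or of degree sum at least $n-1$. A short preliminary lemma --- an Ore-type rotation argument packaged once and for all --- shows that any such ``$o_{-1}$-path'' can be converted into a genuine path on the same vertex set, which is longer than $P$, a contradiction. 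The device you are missing is exactly this $o_{-1}$-path lemma: it absorbs the entire rotation machinery into one clean statement, after which the theorem falls out in three lines.
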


It is not difficult to see that $P_3$ is the only connected graph
$S$ such that every connected $S$-$o_{-1}$-heavy graph is traceable.
It is more interesting to consider which pair of graphs $R$ and $S$
other than $P_3$ imply that every connected $\{R,S\}$-$o_{-1}$-heavy
graph is traceable. In fact, as we show bellow, there is only one
such pair of subgraphs.

\begin{theorem}
Let $R$ and $S$ be connected graphs with $R,S\neq P_3$ and let $G$
be a connected graph. Then $G$ being $\{R,S\}$-$o_{-1}$-heavy
implies $G$ is traceable if and only if (up to symmetry) $R=K_{1,3}$
and $S=C_3$.
\end{theorem}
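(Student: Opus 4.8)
The plan is to treat the two implications separately, starting from the observation that the triangle $C_3$ has no pair of nonadjacent vertices, so a graph is $C_3$-$o_{-1}$-heavy exactly when it is $C_3$-free. Thus the ``if'' direction asserts precisely that every connected, triangle-free, claw-$o_{-1}$-heavy graph is traceable, while the ``only if'' direction must rule out every other admissible pair.

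For the \emph{necessity} I would first use that $H$-free implies $H$-$o_{-1}$-heavy, so if $\{R,S\}$-$o_{-1}$-heaviness forces traceability then so does $\{R,S\}$-freeness; by the Faudree--Gould theorem \cite{Faudree_Gould} this already pins the candidates down to $R=K_{1,3}$ and $S\in\{C_3,P_4,Z_1,B,N\}$. To discard the four surviving values I would invoke the same induced-subgraph monotonicity as in the excerpt: since $P_4$ and $Z_1$ are induced subgraphs of $B$, which is an induced subgraph of $N$, a single non-traceable graph that is $\{K_{1,3},P_4\}$-$o_{-1}$-heavy (respectively $\{K_{1,3},Z_1\}$-$o_{-1}$-heavy) automatically settles $B$ and $N$ as well. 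So it suffices to produce two connected non-traceable counterexamples, one claw- and $P_4$-$o_{-1}$-heavy and one claw- and $Z_1$-$o_{-1}$-heavy. The hard part is that claw-$o_{-1}$-heaviness is very restrictive: a cut vertex whose deletion leaves three components, a large low-degree independent set, a pendant vertex, and every non-traceable complete multipartite graph or cograph all create an induced claw two of whose leaves have degree sum well below $n-1$. Hence the counterexamples can be neither claw-free ``by sparsity'' nor $P_4$/$Z_1$-free; they must contain such copies while keeping each one heavy. I would therefore attach a small obstruction (forcing at least three path-ends, or a localized bipartite imbalance) to a dense degree-lifting part whose vertices supply the required heavy nonadjacent pair inside every induced claw, $P_4$, and paw; balancing the two part-sizes against $n-1$ so that all copies are heavy while the obstruction survives is the delicate verification.

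For the \emph{sufficiency} I would argue by contradiction on a longest path $P=v_1\cdots v_k$ in a connected, triangle-free, claw-$o_{-1}$-heavy graph $G$ with $k<n$. If $\Delta(G)\le 2$ then $G$ is a path or a cycle, hence traceable, so some vertex has degree at least $3$; by connectedness some $w\notin P$ is adjacent to an interior vertex $v_i$, and triangle-freeness forces $v_{i-1},v_{i+1},w$ to be pairwise nonadjacent, so $\{v_i;v_{i-1},v_{i+1},w\}$ is an induced claw. The hypothesis then yields a pair among $\{v_{i-1},v_{i+1},w\}$ with degree sum at least $n-1$. Against this stands the classical bound that the endpoints of a longest path satisfy $\deg(v_1)+\deg(v_k)\le k-1\le n-2$ (otherwise an Ore-type crossing gives a spanning cycle of $V(P)$, which connectedness extends to a longer path, as in \cite{Ore}). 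The whole strategy is to realize one heavy pair as the two endpoints of some longest path, or equivalently to build a Hamilton cycle of $V(P)$ (extendable through $w$) or a genuinely longer path, each contradicting maximality.

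The step I expect to be the main obstacle is the case where the heavy pair is $\{v_{i-1},v_{i+1}\}$, the two path-neighbours flanking the attachment. Neither is an endpoint, and triangle-freeness actually \emph{forbids} inserting $w$ into $P$, since two consecutive neighbours of $w$ would complete a triangle; thus $w$ can help only after a genuine rerouting. Here I would run P\'osa rotations keeping one end of $P$ fixed, seed a rotation at $v_i$ using $w\sim v_i$, and exploit the large degree sum of $v_{i-1},v_{i+1}$ to force a crossing that either closes a Hamilton cycle on $V(P)$ or exposes two new endpoints whose degree sum exceeds $k-1$. A secondary complication is that interior vertices such as $v_{i\pm1}$ may have off-path neighbours, so their degrees are not confined to $P$; to control this I would fix $P$ longest and then extremal for a secondary parameter (for instance the attachment position, or the degree sum of the current ends), so that every relevant neighbour can be brought onto the path before the final counting argument is applied.
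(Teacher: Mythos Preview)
Your necessity argument is essentially the paper's: reduce, via the observation that $H$-free implies $H$-$o_{-1}$-heavy, to the Faudree--Gould list $\{C_3,P_4,Z_1,B,N\}$, then kill $P_4,Z_1,B,N$ by exhibiting non-traceable connected graphs that are $\{K_{1,3},P_4\}$-$o_{-1}$-heavy and $\{K_{1,3},Z_1\}$-$o_{-1}$-heavy respectively. The paper supplies two explicit constructions $G_1,G_2$ (Section~3); you describe correctly what such examples must achieve but still have to build and verify them.

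For sufficiency your route diverges from the paper and has a real gap. The paper does not argue on a single attachment to a longest path. It first proves an ``$o_{-1}$-path'' lemma (Lemma~1): any sequence $v_1\cdots v_p$ in which every consecutive pair is adjacent or has degree sum $\ge n-1$ can be straightened into a genuine path on the same vertex set. It then proves the stronger Theorem~7 (claw-$o_{-1}$-heavy and $Z_1$-free implies traceable), splitting into a separable case (handled by a detailed structural analysis of the neighbourhood of a cut vertex) and a $2$-connected case. In the $2$-connected case the decisive move is to take an internally disjoint path $R$ meeting $P$ at \emph{two} interior vertices $v_i,v_j$; the claws at $v_i$ and at $v_j$ give $d(v_{i-1})+d(v_{i+1})\ge n-1$ and $d(v_{j-1})+d(v_{j+1})\ge n-1$, while reroutings through $R$ (via Lemma~1) give $d(v_{i-1})+d(v_{j-1})<n-1$ and $d(v_{i+1})+d(v_{j+1})<n-1$. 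Summing these four inequalities is already a contradiction in the triangle-free case, since neither $v_{i-1}v_{i+1}$ nor $v_{j-1}v_{j+1}$ can be an edge.

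Your single-attachment plan cannot reproduce this. In your hard case the heavy pair is $\{v_{i-1},v_{i+1}\}$, and neither vertex is an endpoint of any longest path you have in hand. ``Seeding a rotation at $v_i$ using $w\sim v_i$'' is not a P\'osa rotation: rotations pivot on a neighbour of the \emph{current endpoint}, and $w\sim v_i$ supplies no such pivot. Nor does a large degree sum at two interior vertices force an Ore-type crossing that closes a cycle on $V(P)$; that counting needs the two high-degree vertices to be the ends of the path, and you have no mechanism to make them so without already having inserted $w$. The paper's resolution is precisely to invoke $2$-connectivity to obtain the second attachment $v_j$ and play the two claws off against each other, and to treat the separable case by a completely separate argument; both ingredients are absent from your plan, and the secondary-extremal-parameter idea you sketch does not substitute for either.
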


Since $C_3$ is a clique, a graph is $C_3$-$o_{-1}$-heavy is
equivalent to it is $C_3$-free. Thus for the sufficiency of Theorem
6, we only need to prove that every connected claw-$o_{-1}$-heavy
and $C_3$-free graph is traceable. In fact, we can prove a stronger
theorem as bellow.

\begin{theorem}
If $G$ is a connected claw-$o_{-1}$-heavy and $Z_1$-free graph, then
$G$ is traceable.
\end{theorem}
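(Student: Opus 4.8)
The plan is to argue by contradiction from a longest path, using the forbidden-subgraph condition to manufacture induced claws and the heavy condition to control degrees. First I would dispose of the trivial cases $n\le 2$. Assume $G$ is connected, claw-$o_{-1}$-heavy and $Z_1$-free but not traceable, and let $P=x_1x_2\cdots x_p$ be a longest path; then $p<n$. Maximality gives $N(x_1),N(x_p)\subseteq V(P)$, and the usual crossover observation applies: if $x_1x_i\in E(G)$ and $x_{i-1}x_p\in E(G)$ for some $i$, then $G[V(P)]$ has a Hamilton cycle, which together with connectivity and $p<n$ yields a path longer than $P$, a contradiction. Hence no such $i$ exists; in particular $x_1x_p\notin E(G)$ and $\deg(x_1)+\deg(x_p)\le p-1\le n-1$. (A tempting shortcut is to pass to $G^{+}=G\vee K_1$ and invoke the heavy hamiltonicity theorem for the pair $\{K_{1,3},Z_1\}$, but the claws of $G^{+}$ centred at the new vertex correspond to independent triples of $G$ and need not be heavy --- e.g.\ for $G=P_5$ --- so $G^{+}$ need not be $\{K_{1,3},Z_1\}$-heavy and this route fails.)

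The engine of the direct argument is the interplay between the two hypotheses. Since $p<n$ and $G$ is connected, some vertex $u\notin V(P)$ is adjacent to an internal vertex $x_i$ of $P$. Maximality of $P$ forces $ux_{i-1}\notin E(G)$ and $ux_{i+1}\notin E(G)$, since otherwise $u$ could be inserted to lengthen $P$. If $x_{i-1}x_{i+1}\in E(G)$, then $\{x_{i-1},x_i,x_{i+1},u\}$ induces a $Z_1$ (triangle $x_{i-1}x_ix_{i+1}$ with pendant $u$ at $x_i$), contradicting $Z_1$-freeness; hence $x_{i-1}x_{i+1}\notin E(G)$ and $\{x_i;x_{i-1},x_{i+1},u\}$ is an induced claw. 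Applying the claw-$o_{-1}$-heavy condition produces a nonadjacent pair among $\{x_{i-1},x_{i+1},u\}$ with degree sum at least $n-1$. This pattern --- \emph{$Z_1$-freeness kills the triangle that would otherwise block an insertion, leaving an induced claw, whose heavy pair the claw-$o_{-1}$-heavy hypothesis then supplies} --- is what I would apply repeatedly throughout.

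The main obstacle is to convert such a heavy pair into an actual contradiction, i.e.\ into a path longer than $P$ or a Hamilton cycle of $G[V(P)]$. I would exploit the large degree sum to locate, among the many neighbours of the heavy pair, two consecutive path-vertices $x_j,x_{j+1}$ through which a rotation or an insertion of $u$ is possible, using $Z_1$-freeness again at each step to rule out the triangles that would obstruct the rerouting; the goal is to drive the configuration back to the endpoints and show $\deg(x_1)+\deg(x_p)\ge n-1$, which with the crossover bound $\deg(x_1)+\deg(x_p)\le p-1\le n-1$ forces $p=n$, contradicting $p<n$. The delicate point is the $o_{-1}$ slack: because the heavy condition only guarantees degree sum $n-1$ (rather than $n$, as in the hamiltonian theorems), the relevant counting is tight and leaves no room to spare, so the case in which the heavy pair involves the off-path vertex $u$ must be handled with particular care --- there one tracks how $N(u)$ distributes along $P$ and how $u$ interacts, via $Z_1$-freeness, with the triangles on $P$, possibly invoking the $P_3$-$o_{-1}$-heavy traceability result on a suitable subconfiguration. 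This surgery, organised as a case analysis on which pair is heavy and on the local structure around $x_i$, is where essentially all of the work lies.
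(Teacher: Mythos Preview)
Your setup is sound up to the point where you obtain an induced claw $\{x_i;x_{i-1},x_{i+1},u\}$ and a heavy pair, but the proposal stops precisely where the actual difficulty begins, and the route you sketch does not obviously close. The concrete gap is the case where the heavy pair is $\{x_{i-1},x_{i+1}\}$ (not involving $u$): knowing $d(x_{i-1})+d(x_{i+1})\ge n-1$ gives you information about two \emph{on-path} vertices, but there is no evident rotation that lets you insert $u$, and your plan to ``drive the configuration back to the endpoints'' is not spelled out. With only a \emph{single} attachment vertex $u$ to $P$, this case genuinely resists the Ore-type counting you have in mind. (Incidentally, you have the difficulty reversed: when the heavy pair does involve $u$, the $o_{-1}$-path $x_1\cdots x_{i-1}ux_i\cdots x_p$ together with the paper's Lemma~1 immediately yields a longer path; it is the $\{x_{i-1},x_{i+1}\}$ case that is hard.)

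The paper circumvents this by an argument of a different shape. First it splits off the separable case entirely and handles it by a structural analysis around a cut-vertex $x$ of degree $\ge 3$: $Z_1$-freeness forces $N(x)$ (and the neighbourhoods one and two steps away) to be independent, and then Lemma~2 plus a three-way case split on $d(x)$ pins down $G$ as essentially a complete bipartite graph with a pendant, from which an explicit Hamilton path is written down. Second, in the $2$-connected case the paper does \emph{not} work with a single off-path vertex $u$; it uses $2$-connectivity to find a path $R$ internally disjoint from $P$ with \emph{two} attachment points $v_i,v_j$. This second attachment is exactly what breaks the impasse above: combining the two claws at $v_i$ and $v_j$ with the non-adjacencies $v_{i-1}v_{j-1},\,v_{i+1}v_{j+1}\notin\widetilde{E}_{-1}(G)$ (Claim~3) forces at least one of $v_{i-1}v_{i+1},\,v_{j-1}v_{j+1}$ to be an \emph{actual} edge (Claim~4), and then the triangle with the pendant internal vertex of $R$ is an induced $Z_1$. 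Your single-attachment framework cannot reproduce Claim~4, and without it you have no mechanism to upgrade $x_{i-1}x_{i+1}\in\widetilde{E}_{-1}(G)$ to $x_{i-1}x_{i+1}\in E(G)$ --- which is precisely what is needed to manufacture the forbidden $Z_1$.
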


We postpone the proof of Theorem 7 in Section 5, and in Section 6,
we will prove the following theorem, which shows another subgraph
$S$ such that a connected claw-$o_{-1}$-heavy and $S$-free graph is
traceable.

\begin{theorem}
If $G$ is a connected claw-$o_{-1}$-heavy and $P_4$-free graph, then
$G$ is traceable.
\end{theorem}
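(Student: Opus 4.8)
The plan is to use the defining structure of connected $P_4$-free graphs: such a graph is a \emph{cograph}, and since it is connected it decomposes as a join $G=G_1\vee G_2\vee\cdots\vee G_k$ with $k\ge2$, where ``$\vee$'' means every vertex of $G_i$ is adjacent to every vertex of $G_j$ for $i\neq j$, and where (choosing the parts maximally) each $G_i$ is either a single vertex or a disconnected cograph. I would argue by contraposition: assuming $G$ is connected, $P_4$-free and \emph{not} traceable, I will exhibit an induced claw that is not $o_{-1}$-heavy, contradicting the hypothesis. A convenient bookkeeping tool is the path-cover number $\pi(H)$ (the least number of vertex-disjoint paths covering $V(H)$), so that $H$ is traceable exactly when $\pi(H)=1$; recall $\pi$ is additive over disjoint unions and, by the Gallai--Milgram theorem, $\pi(H)\le\alpha(H)$.

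The first real step is a traceability criterion for the join. By writing a would-be Hamilton path as an alternating sequence of maximal monochromatic blocks (each block a path inside one part, consecutive blocks in different parts), one sees that $G=G_1\vee\cdots\vee G_k$ is traceable if and only if $\pi(G_j)\le 1+(n-n_j)$ for every $j$, where $n_j=|V(G_j)|$. Hence non-traceability yields a \emph{dominant part}, say $G_1$, with $\pi(G_1)\ge 2+(n-n_1)$. Writing $r=n-n_1=\sum_{i\ge2}n_i\ge1$, this says $\pi(G_1)\ge r+2\ge3$, so $G_1$ is not a single vertex; it is a disconnected cograph $G_1=A_1\cup\cdots\cup A_m$ with $\sum_t\pi(A_t)=\pi(G_1)\ge r+2$, and moreover $\alpha(G_1)\ge\pi(G_1)\ge r+2$ and $n_1\ge r+2$.

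The target is now three pairwise nonadjacent vertices $x,y,z$ of $G_1$ whose degree sums are all small, together with any center $c$ taken from another part $G_2$: since $c$ is automatically adjacent to $x,y,z$ (a cross edge) and $x,y,z$ lie in the single part $G_1$, the set $\{c;x,y,z\}$ is an induced claw. For $v\in G_1$ one has $d_G(v)=(n-n_1)+d_{G_1}(v)=r+d_{G_1}(v)$, so for nonadjacent $x,y\in G_1$ the pair is \emph{not} heavy exactly when $d_{G_1}(x)+d_{G_1}(y)\le 2n_1-n-2$. Thus I would finish by producing a pairwise-nonadjacent triple in $G_1$ all three of whose $G_1$-degree sums are at most $2n_1-n-2$ (a nonnegative quantity here): such a triple forces all three pairs of the claw to fail the $o_{-1}$-heavy condition, completing the contrapositive.

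The hard part will be constructing that low-degree independent triple. The naive attempt---take three vertices of a maximum independent set $I$ of $G_1$---is not enough: such a vertex only satisfies $d_{G_1}(v)\le n_1-|I|$, and since $\alpha(G_1)\ge r+2$ matches the required threshold $2n_1-n-2$ only when $n_1=r+2$, this estimate breaks down as soon as $n_1$ is strictly larger, precisely when $G_1$ contains large cliques that inflate the degrees of independent vertices. The genuine input is that $\pi(G_1)\ge r+2$ is strictly stronger than $\alpha(G_1)\ge r+2$: equivalently, a \emph{maximum spanning linear forest} of $G_1$ must leave at least $r+2$ paths, so $G_1$ has many path-endpoints, and the exchange/optimality property of a maximum linear forest (an endpoint cannot be adjacent to a vertex that is already internally saturated) yields independent vertices of genuinely small degree. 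I would make this precise either through such a linear-forest endpoint argument or by inducting on the cotree, peeling off the dominant part $A_t$ of $G_1$ and recursing; in either case the extremal/recursive analysis isolating these low-degree endpoints is the technical heart of the proof. (As a possible shortcut, one may instead invoke the Bondy--Chv\'atal-type closure for traceability and aim to show that the $(n-1)$-closure of $G$ is complete, but verifying completeness reduces to the same degree bookkeeping inside each part.)
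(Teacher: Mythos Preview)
Your approach via the cograph join decomposition is genuinely different from the paper's. The paper never uses cograph structure: in the separable case it shows (using the $P_4$-free hypothesis directly) that a cut vertex is adjacent to every other vertex, whence by the cut-vertex lemma every two vertices on one side satisfy $d(y)+d(y')\ge n-1$, and a single $o_{-1}$-path spans $G$; in the 2-connected case it takes a longest path $P$, a shortest internally disjoint detour $R$ attaching at $v_i,v_j$, and after four short claims exhibits an explicit induced $P_4$ of the form $x_1 v_i v_{k-1} v_k$. The whole argument reuses the $o_{-1}$-path machinery already set up for Theorem~7 and fits in a page. Your route trades that machinery for a structural lemma about degrees in cographs, which is conceptually clean but carries its own cost.

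That cost is exactly where your proposal has a gap. Locating a dominant part $G_1$ with $\pi(G_1)\ge r+2$ is correct (only the easy direction of the join criterion is needed), but the existence of three pairwise nonadjacent vertices in $G_1$ with all pairwise $G_1$-degree sums at most $n_1-r-2$ is left unproved. Your linear-forest endpoint heuristic is insufficient as stated: choosing one endpoint per path in a minimum path cover gives an independent set of size $\pi(G_1)$, and each such endpoint has degree at most $n_1-\pi(G_1)$, but that bounds pairwise sums only by $2(n_1-\pi(G_1))$, a factor of two too weak. The cotree induction you mention does succeed; the right auxiliary statement is that every cograph $A$ with $\pi(A)\ge 2$ contains two nonadjacent vertices with degree sum at most $|A|-\pi(A)$ (proved by recursing into the unique dominant co-component via $\pi(B\vee H)=\pi(B)-|H|$ when $B$ is dominant), and combining such a pair in one component of $G_1$ with a minimum-degree vertex from another component (using $\delta(A)\le|A|-\pi(A)$) yields the triple. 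Until that induction is actually carried out, what you have is a correct plan rather than a proof.
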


In fact, these are the only forbidden subgraphs satisfying such
property.

\begin{theorem}
Let $S$ be connected graphs with $S\neq P_3$ and let $G$ be a
connected claw-$o_{-1}$-heavy graph. Then $G$ being $S$-free implies
$G$ is traceable if and only if $S=C_3,Z_1$ or $P_4$.
\end{theorem}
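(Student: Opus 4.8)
The plan is to prove both implications of the equivalence. The \emph{sufficiency} (that $S=C_3,Z_1$ or $P_4$ each force traceability) is essentially already in hand: the cases $S=Z_1$ and $S=P_4$ are exactly Theorems 7 and 8, while for $S=C_3$ I note that $C_3$ is an induced subgraph of $Z_1$, so by the monotonicity remarked in the introduction a $C_3$-free graph is $Z_1$-free, whence Theorem 7 applies. Thus all the work lies in the \emph{necessity}: I must show that for every connected $S\neq P_3$ \emph{other than} $C_3,Z_1,P_4$ there is a connected claw-$o_{-1}$-heavy graph that is $S$-free but not traceable. The organising principle is that this ``bad'' property is preserved upwards along the induced-subgraph order --- if $G$ witnesses badness of $S_1$ and $S_1$ is an induced subgraph of $S_2$, then $G$ is also $S_2$-free and witnesses badness of $S_2$ --- so it suffices to produce a few witnesses that between them exclude everything except the three good graphs.

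First I would use the \emph{net} $N$ itself as a single universal witness. It is connected, it is claw-free (each degree-$3$ vertex of the central triangle has two mutually adjacent neighbours, so its neighbourhood contains no independent triple) and hence claw-$o_{-1}$-heavy, and it is not traceable because its three pendant vertices have degree $1$ while a path has only two ends. A direct inspection shows that the connected induced subgraphs of $N$ having at least three vertices and different from $P_3$ are precisely $C_3$, $Z_1$, $P_4$, $B$ and $N$. Consequently, if a connected graph $S\neq P_3$ is \emph{not} one of these five, then $N$ contains no induced copy of $S$, i.e.\ $N$ is $S$-free, and $N$ is then exactly the required non-traceable claw-$o_{-1}$-heavy $S$-free graph. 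This reduces the necessity to the two remaining candidates $B$ and $N$.

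It remains to show that $S=B$ and $S=N$ are bad. Since the net contains induced copies of both the bull and the net, it cannot serve here, and I would instead construct a dedicated counterexample: a connected claw-$o_{-1}$-heavy graph $G_0$ that is bull-free and not traceable. Because the bull is an induced subgraph of the net, such a $G_0$ is automatically net-free, so the same graph settles both $S=B$ and $S=N$ at once (if a single graph proves awkward, one may give two graphs, the second only required to be net-free). Verifying the four properties of $G_0$ --- connected, not traceable, bull-free, and every induced claw $o_{-1}$-heavy --- is then the only task left.

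The hard part will be the construction of $G_0$, and it is genuinely constrained from two sides. On one hand, by Theorems 3 and 4 a connected claw-free graph that is net-free ---in particular one that is bull-free--- is already traceable, so $G_0$ \emph{must} contain at least one induced claw, and every such claw must be $o_{-1}$-heavy, i.e.\ two of its three pairwise non-adjacent leaves must have degree sum at least $n-1$. On the other hand, the most natural certificates of non-traceability work directly against this: if non-traceability is forced by a cut vertex whose removal leaves three components, or more generally by a dominating independent set, then the associated induced claw has its three leaves sitting in distinct ``small'' parts of the graph, and one checks that every pair of these leaves is light (degree sum at most $n-2$), so the graph fails to be claw-$o_{-1}$-heavy. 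Thus $G_0$ must realise non-traceability \emph{without} such a structure, placing every induced claw so that two of its leaves have high degree while the obstruction to a Hamilton path is carried by vertices that never occur together as the three leaves of a claw. Designing this graph, and checking bull-freeness (which forbids a triangle with two non-adjacent pendant-type vertices attached at distinct triangle-vertices), is where the real effort goes; once $G_0$ is exhibited, the theorem follows by combining it with the net witness and the sufficiency supplied by Theorems 7 and 8.
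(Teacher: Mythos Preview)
Your sufficiency argument is correct and matches the paper. Your reduction for the necessity is also correct and in fact slightly more self-contained than the paper's: where you use the net $N$ directly as a witness to cut the list of candidates down to $\{C_3,P_4,Z_1,B,N\}$, the paper instead invokes Faudree--Gould (Theorem~4) to reach the same list (since a claw-free graph is claw-$o_{-1}$-heavy, any $S$ that works here must already work in the claw-free setting). The two routes are essentially equivalent; your direct inspection of the induced subgraphs of $N$ is precisely the easy direction of Theorem~4.

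The genuine gap is that you do not actually produce the graph $G_0$. You correctly identify the constraints it must satisfy---claw-$o_{-1}$-heavy, bull-free, connected, non-traceable---and you correctly observe that the obvious small candidates fail, but you stop short of a construction. The paper supplies exactly this missing piece as the graph $G_1$ of Figure~2: take a clique $K$ on $n-2k$ vertices, add $k\ge 3$ vertices $x_1,\dots,x_k$ each adjacent to every vertex of $K$ (and to nothing else), and attach a pendant $x'_i$ to each $x_i$; require $n\ge 4k-3$. This graph has $k\ge 3$ vertices of degree~$1$, hence is non-traceable. Every triangle uses at most one $x_i$ and at least two clique vertices; a clique vertex has no private neighbour outside such a triangle, so no bull arises. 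The only induced claws are $\{u,x_i,x_j,x_\ell\}$ with $u\in K$, and $d(x_i)+d(x_j)=2(n-2k+1)\ge n-1$ exactly when $n\ge 4k-3$, so these claws are $o_{-1}$-heavy. Since $G_1$ is $B$-free it is also $N$-free, disposing of both remaining cases at once, just as you planned. With this construction in hand your outline becomes a complete proof.
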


We prove the necessity of Theorems 4 and 7 in Section 3.

\section{Some preliminaries}

We first give some additional terminology and notation.

Let $G$ be a graph, $P$ be a path of $G$ and $x,y\in V(P)$. We use
$P[x,y]$ to denote the subpath of $P$ from $x$ to $y$.

Let $G$ be a graph on $n$ vertices and $k$ be an integer. We call a
sequence of vertices $P=v_1v_2\cdots v_k$ an \emph{$o_{-1}$-path} of
$G$, if for all $i\in [1,k-1]$, either $v_iv_{i+1}\in E(G)$ or
$d(v_i)+d(v_{i+1})\geq n-1$. The \emph{deficit} of $P$ is defined by
$\de(P)=|\{i\in[1,k-1]: v_iv_{i+1}\notin E(G)\}|$. Thus a path is an
$o_{-1}$-path with deficit 0.

Now, we prove the following lemma on $o_{-1}$-paths.

\begin{lemma}
Let $G$ be a graph and $P$ an $o_{-1}$-path of $G$. Then there
exists a path of $G$ which contains all the vertices in $V(P)$.
\end{lemma}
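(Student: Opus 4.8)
The plan is to induct on the deficit $\de(P)$. If $\de(P)=0$ then $P$ is already a path and we are done, so assume $\de(P)\ge 1$ and fix an index $i$ with $v_iv_{i+1}\notin E(G)$, so that $d(v_i)+d(v_{i+1})\ge n-1$. Write $x=v_i$ and $y=v_{i+1}$. The starting observation is a pigeonhole: $N(x)$ and $N(y)$ are both contained in $V(G)\setminus\{x,y\}$, a set of size $n-2$, while $|N(x)|+|N(y)|\ge n-1$; hence $x$ and $y$ have a common neighbour. The whole proof then consists of turning the single bad link $v_iv_{i+1}$ into strictly fewer bad links and invoking induction, and I would organize this according to whether a common neighbour can be found off $P$.

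First, if some common neighbour $w$ of $x$ and $y$ lies outside $V(P)$, I insert it: the sequence $v_1\cdots v_i\,w\,v_{i+1}\cdots v_k$ is again an $o_{-1}$-path, its two new links $v_iw$ and $wv_{i+1}$ are genuine edges, so its deficit is $\de(P)-1$, and its vertex set is $V(P)\cup\{w\}\supseteq V(P)$. The induction hypothesis applied to it produces a path of $G$ through $V(P)\cup\{w\}$, hence through $V(P)$.

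The substantive case -- which I expect to be the main obstacle -- is when every common neighbour of $x$ and $y$ already lies on $P$; note this subsumes the spanning situation $V(P)=V(G)$. Put $U=V(P)$ and $k=|U|$. Since $x$ and $y$ now have no common neighbour off $P$, their off-path neighbourhoods are disjoint subsets of $V(G)\setminus U$, a set of size $n-k$, so that $|N(x)\cap U|+|N(y)\cap U|\ge (n-1)-(n-k)=k-1$. This is exactly an Ore-type condition \emph{internal} to $U$, and I would exploit it by a crossover. Adjoining two phantom ends $v_0,v_{k+1}$ regarded as adjacent to everything, I look for an index $j$ with $v_j$ adjacent to $x$ and $v_{j+1}$ adjacent to $y$. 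The index sets $A=\{j:v_j\sim x\}$ and $B=\{j:v_{j+1}\sim y\}$ both lie in $\{0,1,\dots,k\}\setminus\{i\}$, a set of size $k$, and counting the two phantoms their sizes are $|N(x)\cap U|+1$ and $|N(y)\cap U|+1$, which sum to at least $k+1$; therefore $A\cap B\neq\emptyset$.

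For such a $j$ one checks automatically that $j\notin\{i-1,i,i+1\}$, and reversing the block of $P$ strictly between the gap and position $j$ (the block $v_{i+1}\cdots v_j$ when $j>i$, or $v_{j+1}\cdots v_i$ when $j<i$, with the phantom values $j=0$ and $j=k$ corresponding to simply reattaching an end) replaces the bad link $v_iv_{i+1}$ by the two genuine edges $v_iv_j$ and $v_{i+1}v_{j+1}$, while every other link keeps its former status. Thus one obtains an $o_{-1}$-path on the same vertex set $U$ with deficit at most $\de(P)-1$, and induction finishes the argument. The delicate point is precisely this counting: the ``$-1$'' in the degree-sum hypothesis matches exactly the two phantom ends needed to handle a path rather than a cycle, so the pigeonhole is tight, and the only real care is checking that the winning $j$ avoids the three forbidden positions and that the corresponding reversal is legitimate.
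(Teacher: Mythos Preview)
Your proof is correct and follows the same strategy as the paper's: argue by induction (equivalently, minimal counterexample) on the deficit, insert a common off-path neighbour if one exists, and otherwise perform an Ore-type crossover inside $P$; your phantom-end device is just a tidy packaging of the two endpoint cases $v_1y\in E(G)$ and $v_kx\in E(G)$ that the paper handles separately. One small wording slip worth fixing: for a non-phantom $j$ the reversal also breaks the link $v_jv_{j+1}$, not only the bad link $v_iv_{i+1}$, but since both replacement links are genuine edges the deficit still drops by at least one, so your conclusion stands.
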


\begin{proof}
Assume the opposite. Let $P'$ be an $o_{-1}$-path which contains all
the vertices in $V(P)$ such that $\de(P')$ is as small as possible.
Then we have $\de(P')\geq 1$. Without loss of generality, we assume
that $P'=v_1v_2\cdots v_p$ such that $v_kv_{k+1}\notin E(G)$ and
$d(v_k)+d(v_{k+1})\geq n-1$, where $1\leq k\leq p-1$.

If $v_k$ and $v_{k+1}$ have a common neighbor in $V(G)\backslash
V(P)$, denote it by $x$. Then
$P''=P'[v_1,v_k]v_kxv_{k+1}P'[v_{k+1},v_p]$ is an $o_{-1}$-path
which contains all the vertices in $V(P)$ with deficit smaller than
$\de(P')$, a contradiction.

So we assume that $N_{G-P'}(v_1)\cap N_{G-P'}(v_k)=\emptyset$. Then
we have $d_{P'}(v_k)+d_{P'}(v_{k+1})\geq |V(P')|-1$ by
$d(v_k)+d(v_{k+1})\geq n-1$.

If $v_1v_{k+1}\in E(G)$, then
$P''=P'[v_k,v_1]v_1v_{k+1}P'[v_{k+1},v_p]$ is an $o_{-1}$-path which
contains all the vertices in $V(P)$ with deficit smaller than
$\de(P')$, a contradiction. Thus we assume that $v_1v_{k+1}\notin
E(G)$, and similarly, $v_pv_k\notin E(G)$. Thus, there exists
$i\in[1,p-1]\setminus\{k\}$ such that $v_i\in N_P(v_k)$ and
$v_{i+1}\in N_P(v_{k+1})$.

If $1\leq i\leq k-1$, then
$P''=P'[v_1,v_i]v_iv_kP'[v_k,v_{i+1}]v_{i+1}v_{k+1}P'[v_{k+1},v_p]$
is an $o_{-1}$-path which contains all the vertices in $V(P)$ with
deficit smaller than $\de(P')$, a contradiction. If $k+1\leq i\leq
p-1$, then
$P''=P'[v_1,v_k]v_kv_iP'[v_i,v_{k+1}]v_{k+1}v_{i+1}P'[v_{i+1},v_p]$
is an $o_{-1}$-path which contains all the vertices in $V(P)$ with
deficit smaller than $\de(P')$, a contradiction.
\end{proof}

In the following, we use $\widetilde{E}_{-1}(G)$ to denote the set
$\{uv: uv\in E(G)$ or $d(u)+d(v)\geq n-1\}$.

We now give a lemma on claw-$o_{-1}$-heavy graphs.

\begin{lemma}
Let $G$ be a connected claw-$o_{-1}$-heavy graphs and $x$ be a
cut-vertex of $G$. Then\\
(1) $G-x$ contains exactly two components; and\\
(2) if $x_1$ and $x_2$ are two neighbors of $x$ in a common
component of $G-x$, then $x_1x_2\in\widetilde{E}_{-1}(G)$.
\end{lemma}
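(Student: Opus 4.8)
The plan is to prove both parts by exhibiting an induced claw with center $x$ and then playing the claw-$o_{-1}$-heavy condition against elementary degree bounds coming from the component structure of $G-x$. Throughout, for a component $C$ of $G-x$ and a vertex $v\in C$, every neighbor of $v$ lies in $C\cup\{x\}$, so $d(v)\le|C|$; moreover, if $v$ is nonadjacent to some other vertex of $C$, then $d(v)\le|C|-1$. This sharpened bound, together with the fact that vertices in distinct components of $G-x$ are pairwise nonadjacent, is the engine of both arguments.

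For part (1), I would suppose to the contrary that $G-x$ has at least three components. Since $x$ is a cut-vertex of the connected graph $G$, it has a neighbor in each component, so I can choose $y_1,y_2,y_3$ lying in three distinct components and each adjacent to $x$. These three vertices are pairwise nonadjacent, so $\{x,y_1,y_2,y_3\}$ induces a claw with center $x$, whose only nonadjacent pairs are the $\{y_i,y_j\}$. Writing $n_i,n_j$ for the sizes of the two relevant components, we have $d(y_i)+d(y_j)\le n_i+n_j$; since a third nonempty component is present and the component sizes sum to $n-1$, this gives $n_i+n_j\le(n-1)-1=n-2$. Hence no nonadjacent pair attains degree sum $n-1$, contradicting the claw-$o_{-1}$-heavy condition, so $G-x$ has exactly two components.

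For part (2), let $C_1$ be the common component containing the neighbors $x_1,x_2$ of $x$, and let $C_2$ be the second component supplied by part (1), with a neighbor $y$ of $x$ inside $C_2$. I would argue by contradiction, assuming $x_1x_2\notin\widetilde{E}_{-1}(G)$, that is $x_1x_2\notin E(G)$ and $d(x_1)+d(x_2)\le n-2$. Then $x_1,x_2,y$ are pairwise nonadjacent (the first pair by assumption, the others because they lie in different components), so $\{x,x_1,x_2,y\}$ induces a claw with center $x$, and I check that each of its three nonadjacent pairs fails the degree-sum requirement. The pair $\{x_1,x_2\}$ fails by assumption. For $\{x_1,y\}$ the key is that $x_1$ is nonadjacent to $x_2\in C_1$, so $d(x_1)\le|C_1|-1$, while $d(y)\le|C_2|$; adding these and using $|C_1|+|C_2|=n-1$ yields $d(x_1)+d(y)\le n-2$. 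The pair $\{x_2,y\}$ is symmetric. As no nonadjacent pair reaches degree sum $n-1$, this contradicts the hypothesis, and therefore $x_1x_2\in\widetilde{E}_{-1}(G)$.

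I expect no serious obstacle: the whole argument is a direct interplay between the heavy condition and component-size bounds. The one point deserving care—and the only place where the assumption $x_1x_2\notin E(G)$ is genuinely used beyond forming the claw—is the sharpened estimate $d(x_1)\le|C_1|-1$ in part (2). Without noticing that nonadjacency to $x_2$ removes one candidate neighbor, one only obtains $d(x_1)+d(y)\le n-1$, which does not contradict the hypothesis. Ensuring this strict improvement is applied to both $x_1$ and $x_2$ is what closes the proof.
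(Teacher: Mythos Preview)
Your proof is correct and follows essentially the same approach as the paper: form an induced claw centered at $x$ using neighbors from distinct components (or from the same component together with one from the other), then bound the degree sums of the nonadjacent end-pairs by component sizes to force the claw-$o_{-1}$-heavy condition onto the desired pair. The only cosmetic difference is that for part~(2) the paper assumes $x_1x_2\notin E(G)$ and deduces $d(x_1)+d(x_2)\ge n-1$ directly, whereas you phrase it as a contradiction from $x_1x_2\notin\widetilde{E}_{-1}(G)$; the crucial sharpened estimate $d(x_i)\le |C_1|-1$ is identical in both.
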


\begin{proof}
If there are at least three components of $G-x$, then let $H_1, H_2$
and $H_3$ be three components. Let $x_1,x_2$ and $x_3$ be neighbors
of $x$ in $H_1, H_2$ and $H_3$, respectively. Then the subgraph
induced by $\{x,x_1,x_2,x_3\}$ is a claw. Besides, for $1\leq
i<j\leq 3$, $d(x_i)+d(x_j)\leq |V(H_i)|+|V(H_j)|\leq n-2$, a
contradiction. Thus, $G-x$ has exactly two components.

Let $x_1,x_2$ be two neighbors of $x$ in a common component $H$. If
$x_1x_2\notin E(G)$, then let $x'$ be a neighbor of $x$ in the other
component $H'$ and the subgraph induced by $\{x,x_1,x_2,x'\}$ is a
claw. Besides, for $i=1,2$, $d(x_i)+d(x')\leq |V(H)|-1+|V(H')|\leq
n-2$. Since $G$ is claw-$o_{-1}$-heavy, we have $d(x_1)+d(x_2)\geq
n-1$.
\end{proof}

\section{The proof of the necessity of Theorems 6 and 9}

We construct two non-traceable graphs as follows.

\begin{center}
\setlength{\unitlength}{0.75pt}
\begin{picture}(500,260)

\thicklines

\put(0,15){\put(70,80){\thinlines\circle{100}}
\multiput(20,180)(75,0){2}{\multiput(0,0)(25,0){2}{\multiput(0,0)(0,30){2}{\circle*{4}}
\put(0,0){\line(0,1){30}}}} \qbezier[2](65,180)(70,180)(75,180)
\qbezier[2](65,210)(70,210)(75,210) \thinlines
\put(20,180){\line(0,-1){100}} \qbezier(20,180)(120,130)(120,80)
\qbezier(45,180)(20,130)(20,80) \qbezier(45,180)(120,130)(120,80)
\qbezier(95,180)(20,130)(20,80) \qbezier(95,180)(120,130)(120,80)
\qbezier(120,180)(20,130)(20,80) \put(120,180){\line(0,-1){100}}
\put(24,178){$x_1$} \put(49,178){$x_2$} \put(99,178){$x_{k-1}$}
\put(124,178){$x_k$} \put(24,208){$x'_1$} \put(49,208){$x'_2$}
\put(99,208){$x'_{k-1}$} \put(124,208){$x'_k$}
\put(55,75){$K_{n-2k}$}} \put(25,25){$G_1$ ($k\geq 3$ and}
\put(30,10){$n\geq 4k-3$)}

\put(140,0){\put(180,80){\thinlines\circle{100}}

\multiput(130,180)(75,0){2}{\multiput(0,0)(25,0){2}{\circle*{4}}}
\qbezier[2](175,180)(180,180)(185,180)
\multiput(180,210)(0,30){2}{\circle*{4}}
\put(180,210){\line(-5,-3){50}} \put(180,210){\line(-5,-6){25}}
\put(180,210){\line(5,-6){25}} \put(180,210){\line(5,-3){50}}
\put(180,210){\line(0,1){30}} {\thinlines
\put(130,180){\line(0,-1){100}} \qbezier(130,180)(230,130)(230,80)
\qbezier(155,180)(130,130)(130,80)
\qbezier(155,180)(230,130)(230,80)
\qbezier(205,180)(130,130)(130,80)
\qbezier(205,180)(230,130)(230,80)
\qbezier(230,180)(130,130)(130,80) \put(230,180){\line(0,-1){100}}}
\put(134,178){$x_1$} \put(159,178){$x_2$} \put(209,178){$x_{k-1}$}
\put(234,178){$x_k$} \put(184,208){$x'$} \put(184,238){$x$}

\multiput(80,30)(0,75){2}{\multiput(0,0)(0,25){2}{\circle*{4}}}
\qbezier[2](80,75)(80,80)(80,85)
\multiput(50,80)(-30,0){2}{\circle*{4}} \put(50,80){\line(3,-5){30}}
\put(50,80){\line(6,-5){30}} \put(50,80){\line(6,5){30}}
\put(50,80){\line(3,5){30}} \put(50,80){\line(-1,0){30}} {\thinlines
\put(80,30){\line(1,0){100}} \qbezier(80,30)(130,130)(180,130)
\qbezier(80,55)(130,30)(180,30) \qbezier(80,55)(130,130)(180,130)
\qbezier(80,105)(130,30)(180,30) \qbezier(80,105)(130,130)(180,130)
\qbezier(80,130)(130,30)(180,30) \put(80,130){\line(1,0){100}}}
\put(76,134){$y_1$} \put(76,109){$y_2$} \put(76,59){$y_{k-1}$}
\put(76,34){$y_k$} \put(46,84){$y'$} \put(16,84){$y$}

\multiput(280,30)(0,75){2}{\multiput(0,0)(0,25){2}{\circle*{4}}}
\qbezier[2](280,75)(280,80)(280,85)
\multiput(310,80)(30,0){2}{\circle*{4}}
\put(310,80){\line(-3,-5){30}} \put(310,80){\line(-6,-5){30}}
\put(310,80){\line(-6,5){30}} \put(310,80){\line(-3,5){30}}
\put(310,80){\line(1,0){30}} {\thinlines
\put(280,30){\line(-1,0){100}} \qbezier(280,30)(230,130)(180,130)
\qbezier(280,55)(230,30)(180,30) \qbezier(280,55)(230,130)(180,130)
\qbezier(280,105)(230,30)(180,30)
\qbezier(280,105)(230,130)(180,130)
\qbezier(280,130)(230,30)(180,30) \put(280,130){\line(-1,0){100}}}
\put(276,134){$z_1$} \put(276,109){$z_2$} \put(276,59){$z_{k-1}$}
\put(276,34){$z_k$} \put(306,84){$z'$} \put(336,84){$z$}

\put(155,75){$K_{n-3k-6}$} \put(95,10){$G_2$ ($k\geq 5$ and $n\geq
6k+9$)}}

\end{picture}

\small Fig. 2. Two non-traceable graphs.
\end{center}

Let $R$ and $S$ be two connected graphs other than $P_3$ such that
every connected $\{R,S\}$-$o_{-1}$-heavy graph is traceable. Then by
Theorem 2, up to symmetry, $R=K_{1,3}$ and $S$ be $C_3,P_4,Z_1,B$ or
$N$. Note that $G_1$ is $\{K_{1,3},P_4\}$-$o_{-1}$-heavy and $G_2$
is $\{K_{1,3},Z_1\}$-$o_{-1}$-heavy. Thus $S$ must be $C_3$.

Let $S$ be a connected graph other than $P_3$ such that every
connected claw-$o_{-1}$-heavy and $S$-free graph is traceable. By
Theorem 2, $S$ must be $C_3,P_4,Z_1,B$ or $N$. Note that $G_1$ is
$B$-free. Thus $S$ must be $C_3,P_4$ or $Z_1$.

\section{Proof of Theorem 5}

We use $n$ to denote the order of $G$. Let $P=v_1v_2\cdots v_p$ be a
longest path of $G$. Assume that $G$ is not traceable. Then
$V(G)\backslash V(P)\neq \emptyset$. Since $G$ is connected, there
exists a vertex $x\in V(G-P)$ joined to $P$. Let $v_i$ be a neighbor
of $x$ in $P$. Clearly $v_i\neq v_p$, otherwise $P'=Pv_px$ is a path
longer than $P$. If $xv_{i+1}\in E(G)$, then
$P'=P[v_1,v_i]v_ixv_{i+1}P[v_{i+1},v_p]$ is a path longer than $P$,
a contradiction. Thus we assume that $xv_{i+1}\notin E(G)$. Since
$G$ is $P_3$-$o_{-1}$-heavy, we have that $d(x)+d(v_{i+1})\geq n-1$.
Thus $P'=P[v_1,v_i]v_ixv_{i+1}P[v_{i+1},v_p]$ is an $o_{-1}$-path of
$G$. By Lemma 2, there is a path of $G$ containing all the vertices
in $P'$, a contradiction.

\section{Proof of Theorem 7}

If $G$ contains only one or two vertices, then the result is
trivially true. So we assume that $G$ contains at least three
vertices. We use $n$ to denote the order of $G$. We distinguish two
cases.

\begin{case}
$G$ is separable.
\end{case}

If $G$ itself is a path, then we have nothing need to prove. Thus we
assume that $G$ is not a path. Thus there must be a cut-vertex of
$G$ with degree at least 3. Let $x$ be such a cut-vertex. By Lemma
1, $G-x$ has exactly two components. Let $C$ and $D$ be the two
components of $G-x$. Since $d(x)\geq 3$, without loss of generality,
we assume that $x$ has at least two neighbors in $D$.

If $x$ contained in a triangle $xx'x''$, then $x'$ and $x''$ is in a
common component of $G-x$. Without loss of generality, Let
$x',x''\in V(D)$. Let $w$ be a neighbor of $x$ in $C$. Then the
subgraph induced by $\{x,x',x'',w\}$ is a $Z_1$, a contradiction.
Thus we assume that $x$ is not contained in a triangle and $N(x)$ is
an independent set.

Let $y$ be a neighbor of $x$. If $y$ contained in a triangle
$yy'y''$, then clearly $xy',xy''\notin E(G)$, otherwise $x$ will be
contained in a triangle. Thus the subgraph induced by
$\{y,y',y'',x\}$ is a $Z_1$, a contradiction. Thus we assume that
$y$ is not contained in a triangle and $N(y)$ is an independent set.
Similarly, let $z$ be a vertex with distance 2 from $x$, and $y$ be
a common neighbor of $x$ and $z$. If $z$ is contained in a triangle
$zz'z''$, then clearly $yz',yz''\notin E(G)$, otherwise $y$ will be
contained in a triangle. Thus the subgraph induced by
$\{z,z',z'',y\}$ is a $Z_1$, a contradiction. Thus we assume that
$z$ is not contained in a triangle and $N(z)$ is an independent set.
Thus we have that every vertex adjacent to $x$ or with distance 2
from $x$ is not contained in a triangle.

Let $w$ be a neighbor of $x$ in $C$ and $y$ be a neighbor of $x$ in
$D$. Let $y'$ be a neighbor of $x$ in $D$ other than $y$. Since
$yy'\notin E(G)$, by Lemma 1, we have that $d(y)+d(y')\geq n-1$.
Without loss of generality, we assume that $d(y)\geq(n-1)/2$. Note
that $x$ and $y$ have no common neighbors, we have
$d(x)\leq(n+1)/2$.

\noindent\textbf{Case A.} $d(x)=(n+1)/2$.

In this case, $n$ is odd. Let $Y=N(x)\setminus\{w\}$ and
$Z=V(G)\setminus Y\setminus\{x,w\}$. Then $|Y|=(n-1)/2$ and
$|Z|=(n-3)/2$. Since $d(y)\geq(n-1)/2$ and $y$ is nonadjacent to any
vertex in $Y\cup\{w\}$, we have that $y$ is adjacent to every vertex
in $Z$ and $d(y)=(n-1)/2$. This implies that $Z\subset V(D)$. Thus
every vertex in $N_C(x)$ will have degree 1. This implies that $w$
is the only vertex in $C$ and $Y\subset V(D)$.

Note that $d(y)=(n-1)/2$. Let $y'$ be a vertex in $Y$ other than
$y$. By Lemma 1, $d(y)+d(y')\geq n-1$. Thus $d(y')\geq(n-1)/2$.
Since $y'$ is nonadjacent to any vertices in $Y\cup\{w\}$, $y'$ is
adjacent to every vertex in $Z$. This implies that every vertex in
$Y$ and every vertex in $Z$ are adjacent.

Let $Y=\{y_1,y_2,\ldots,y_{(n-1)/2}\}$ and
$Z=\{z_1,z_2,\ldots,z_{(n-3)/2}\}$. Then $P=wxy_1z_1y_2z_2\cdots$
$z_{(n-3)/2}y_{(n-1)/2}$ is a Hamilton path of $G$.

\noindent\textbf{Case B.} $d(x)=n/2$.

In this case, $n$ is odd and $d(y)\geq n/2$. Let
$Y=N(x)\setminus\{w\}$ and $Z=V(G)\setminus Y\setminus\{x,w\}$. Then
$|Y|=(n-2)/2$ and $|Z|=(n-2)/2$. Since $d(y)\geq n/2$ and $y$ is
nonadjacent to any vertices in $Y\cup\{w\}$, we have that $y$ is
adjacent to every vertices in $Z$ and $d(y)=n/2$. This implies that
$Z\subset V(D)$. Thus every vertex in $N_C(x)$ will have degree 1
and then $w$ is the only vertex in $C$ and $Y\subset V(D)$. Note
that $d(x)\geq 3$, $n\geq 6$ and $|Z|\geq 2$.

Let $Y=\{y_1,y_2,\ldots,y_{(n-2)/2}\}$, where $y_1$ is with the
minimum degree and $Z=\{z_1,z_2,$ $\ldots,z_{(n-2)/2}\}$ where $z_1$
is with the maximum degree. For every vertex $y_i$ in $Y$ other than
$y_1$, since $d(y_1)+d(y_i)\geq n-1$, we have $d(y_i)\geq n/2$.
Since $y_i$ is nonadjacent to any vertices in $Y\cup\{w\}$, $y_i$ is
adjacent to every vertex of $Z$. This implies that every vertex in
$Y\setminus\{y_1\}$ and every vertex in $Z$ are adjacent to each
other.

Let $z_i$ be a vertex of $Z$ other than $z_1$. Then the subgraphs
induced by $\{y,x,z_1,z_i\}$ is a claw. Since $d(x)=n/2$, we have
that $d(z_1)\geq(n-2)/2$. Note that $z_1$ is nonadjacent to any
vertex in $Z\cup\{x,w\}$, we have that $z_1$ is adjacent to every
vertex in $Y$ and then $y_1z_1\in E(G)$.

Thus $P=wxy_1z_1y_2z_2\cdots$ $y_{(n-2)/2}z_{(n-2)/2}$ is a Hamilton
path of $G$.

\noindent\textbf{Case C.} $d(x)\leq(n-1)/2$.

Note that $d(x)\geq 3$, we have $n\geq 7$ and $d(y)\geq(n-1)/2\geq
3$. Let $z$ be a neighbor of $y$ other than $x$ with the maximum
degree. Let $z'$ be a neighbor of $y$ other than $x$ and $z$. Then
the subgraph induced by $\{y,x,z,z'\}$ is a claw. Since
$d(x)\leq(n-1)/2$, we have that $d(z)\geq(n-1)/2$.

Let $Y=N(z)$ and $Z=V(G)\setminus Y\setminus\{x,w\}$. Note that
$d(y)\geq(n-1)/2$ and $y$ is nonadjacent to any vertices in
$Y\cup\{w\}$ and $d(z)\geq(n-1)/2$ and $z$ is nonadjacent to any
vertices in $Z\cup\{x,w\}$. We have that $|Y|=(n-1)/2$,
$|Z|=(n-3)/2$ and $y$ is adjacent to every vertex in $Z$. This
implies that there is only the one vertex $w$ in $C$ and $Y,Z\subset
V(D)$.

Note that $x$ has at least two neighbors in $Y$. Let
$Y=\{y_1,y_2,\ldots,y_{(n-1)/2}\}$, where $y_1$ and $y_2$ are two
neighbors of $x$ and $Z=\{z_1,z_2,$ $\ldots,z_{(n-3)/2}\}$ where
$z_1$ is with the minimum degree. Since $d(y_1)+d(y_2)\geq n-1$ and
$y_1$ and $y_2$ are nonadjacent to any vertices in $Y\cup\{w\}$, we
have that $y_1$ and $y_2$ are adjacent to any vertices in $Z$, and
then $y_1z_1,y_2z_1\in E(G)$.

Let $z_i$ be a vertex of $Z$ other than $z_1$. Then the subgraphs
induced by $\{y,x,z_1,z_i\}$ is a claw. Since $d(x)\leq(n-1)/2$, we
have that $d(z_i)\geq(n-1)/2$. Note that $z_i$ is nonadjacent to any
vertices in $Z\cup\{x,w\}$, we have that $z_i$ is adjacent to every
vertex in $Y$. This implies that every vertices in $Y$ and every
vertex in $Z\setminus\{z_1\}$ are adjacent.

Thus $P=wxy_1z_1y_2z_2\cdots$ $z_{(n-3)/2}y_{(n-1)/2}$ is a Hamilton
path of $G$.

\begin{case}
$G$ is 2-connected.
\end{case}

Let $P=v_1v_2\cdots v_p$ be a longest path of $G$. Assume that $G$
is not traceable. Then $V(G)\backslash V(P)\neq \emptyset$. Since
$G$ is 2-connected, there exists a path $R$ with two end-vertices in
$P$ and of length at least 2 which is internally disjoint with $P$.
Let $R=x_0x_1x_2\cdots x_{r+1}$, where $x_0=v_i$ and $x_{r+1}=v_j$,
be such a path as short as possible. Clearly $i\neq 1,p$ and $j\neq
1,p$. Without loss of generality, we assume that $2\leq i<j\leq
p-1$.

\begin{claim}
Let $x\in V(R)\setminus\{v_i,v_j\}$ and $y\in
\{v_{i-1},v_{i+1},v_{j-1},v_{j+1}\}$. Then $xy\notin
\widetilde{E}_{-1}(G)$.
\end{claim}

\begin{proof}
Without loss of generality, we assume $y=v_{i-1}$. If
$xv_{i-1}\in\widetilde{E}_{-1}(G)$, then
$P'=P[v_1,v_{i-1}]v_{i-1}xR[x,v_i]$ $v_iP[v_i,v_p]$ is an
$o_{-1}$-path which contains all the vertices in $V(P)\cup
V(R[x,v_i])$. By Lemma 2, there is a path containing all the
vertices in $P'$, a contradiction.
\end{proof}

\begin{claim}
$v_{i-1}v_{i+1}\in\widetilde{E}_{-1}(G)$,
$v_{j-1}v_{j+1}\in\widetilde{E}_{-1}(G)$.
\end{claim}

\begin{proof}
If $v_{i-1}v_{i+1}\notin E(G)$, by Claim 1, the graph induced by
$\{v_i,x_1,v_{i-1},v_{i+1}\}$ is a claw, where $d(x_1)+d(u_{i\pm
1})<n-1$. Since $G$ is a claw-$o_{-1}$-heavy graph, we have that
$d(v_{i-1})+d(v_{i+1})\geq n$.

The second assertion can be proved similarly.
\end{proof}

\begin{claim}
$v_{i-1}v_{j-1}\notin\widetilde{E}_{-1}(G)$,
$v_{i+1}v_{j+1}\notin\widetilde{E}_{-1}(G)$.
\end{claim}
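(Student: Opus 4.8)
The plan is to argue by contradiction, in exactly the spirit of Claims 1 and 2: if one of the two pairs were to lie in $\widetilde{E}_{-1}(G)$, I would splice the corresponding $o_{-1}$-edge into $P$ together with the detour $R$, obtaining an $o_{-1}$-path that visits every vertex of $P$ and at least one internal vertex of $R$, and then invoke Lemma 2 to turn it into a genuine path longer than $P$, contradicting the maximality of $P$. The two assertions are mirror images of one another under reversing $P$, so I would treat $v_{i-1}v_{j-1}$ in detail and only indicate the change for $v_{i+1}v_{j+1}$.

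First I would record the harmless observation that $j\ge i+2$: if $j=i+1$, the detour $R$ would already produce the longer path $P[v_1,v_i]\,R[v_i,v_j]\,P[v_j,v_p]$, so in particular $v_{i-1}\ne v_{j-1}$ and all segments used below are nondegenerate. Now assume for contradiction that $v_{i-1}v_{j-1}\in\widetilde{E}_{-1}(G)$. Reversing the middle block of $P$ and threading $R$ through it, I form
\[P'=P[v_1,v_{i-1}]\,v_{i-1}v_{j-1}\,P[v_{j-1},v_i]\,R[v_i,v_j]\,P[v_j,v_p].\]
Every consecutive pair of $P'$ is an edge of $P$ or of $R$, save for the single pair $v_{i-1}v_{j-1}$, which lies in $\widetilde{E}_{-1}(G)$ by assumption; hence $P'$ is an $o_{-1}$-path with $\de(P')\le 1$. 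Its vertex set is $V(P)$ together with the internal vertices $x_1,\dots,x_r$ of $R$, so by Lemma 2 there is a path of $G$ through all of them, which is strictly longer than $P$---the desired contradiction. For $v_{i+1}v_{j+1}$ the mirror construction
\[P''=P[v_1,v_i]\,R[v_i,v_j]\,P[v_j,v_{i+1}]\,v_{i+1}v_{j+1}\,P[v_{j+1},v_p]\]
works verbatim.

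There is no deep obstacle here; the content is essentially bookkeeping. The one point I would watch carefully is the orientation of the reversed segment, so that the assumed $o_{-1}$-edge joins precisely the intended endpoints $v_{i-1},v_{j-1}$ (respectively $v_{i+1},v_{j+1}$) and no second non-edge sneaks into the path. I would also verify that each of $P'$ and $P''$ genuinely absorbs an off-path vertex of $R$, so that Lemma 2 delivers a path strictly longer than $P$ rather than one merely of the same length.
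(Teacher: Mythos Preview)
Your argument is correct and matches the paper's own proof essentially verbatim: the paper also assumes $v_{i-1}v_{j-1}\in\widetilde{E}_{-1}(G)$, builds the same $o_{-1}$-path $P'=P[v_1,v_{i-1}]v_{i-1}v_{j-1}P[v_{j-1},v_i]v_iRv_jP[v_j,v_p]$, and dispatches the second assertion by symmetry. Your added remarks (that $j\ge i+2$, that $\de(P')\le1$, and the explicit mirror path for $v_{i+1}v_{j+1}$) are sound refinements of exactly the same idea.
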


\begin{proof}
If $v_{i-1}v_{j-1}\in\widetilde{E}_{-1}(G)$, then
$P'=P[v_1,v_{i-1}]v_{i-1}v_{j-1}P[v_{j-1},v_i]v_iRv_jP[v_j,v_p]$ is
an $o_{-1}$-path which contains all the vertices in $V(P)\cup V(R)$,
a contradiction.

The second assertion can be proved similarly.
\end{proof}

\begin{claim}
Either $v_{i-1}v_{i+1}\in E(G)$ or $v_{j-1}v_{j+1}\in E(G)$
\end{claim}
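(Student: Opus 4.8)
The plan is to argue by contradiction: suppose that \emph{both} $v_{i-1}v_{i+1}\notin E(G)$ and $v_{j-1}v_{j+1}\notin E(G)$, and then derive an impossible chain of degree inequalities by feeding the three preceding claims into a short double-counting argument on the four vertices $v_{i-1},v_{i+1},v_{j-1},v_{j+1}$. The whole statement will turn out to be a purely numerical consequence of Claims 2 and 3, using no further structural input.

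First I would sharpen Claim 2 under the contradiction hypothesis. Claim 2 only asserts $v_{i-1}v_{i+1}\in\widetilde{E}_{-1}(G)$ and $v_{j-1}v_{j+1}\in\widetilde{E}_{-1}(G)$; but since we are assuming neither pair is an edge, the definition of $\widetilde{E}_{-1}(G)$ forces the degree-sum alternative in each case, giving $d(v_{i-1})+d(v_{i+1})\geq n-1$ and $d(v_{j-1})+d(v_{j+1})\geq n-1$. Adding these shows that the total $S:=d(v_{i-1})+d(v_{i+1})+d(v_{j-1})+d(v_{j+1})$ satisfies $S\geq 2n-2$.

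Next I would read off the complementary bounds from Claim 3. Since $v_{i-1}v_{j-1}\notin\widetilde{E}_{-1}(G)$ and $v_{i+1}v_{j+1}\notin\widetilde{E}_{-1}(G)$, these pairs are nonadjacent with degree sum strictly below $n-1$, and as the degrees are integers this gives $d(v_{i-1})+d(v_{j-1})\leq n-2$ and $d(v_{i+1})+d(v_{j+1})\leq n-2$. Adding these bounds the \emph{same} total $S$ from above by $S\leq 2n-4$. Combining the two estimates yields $2n-2\leq S\leq 2n-4$, which is absurd; hence at least one of $v_{i-1}v_{i+1},\,v_{j-1}v_{j+1}$ must be an edge.

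The argument hinges on the single observation that the two inequalities from Claim 2 and the two from Claim 3 regroup the identical four degrees in two different pairings and push the common sum $S$ in opposite directions. I expect no real obstacle beyond spotting this pairing and correctly passing from the strict inequality ``$<n-1$'' in the definition of $\widetilde{E}_{-1}(G)$ to the integer bound ``$\leq n-2$''—that integrality step is exactly what makes the two estimates genuinely incompatible rather than merely tight. All the genuine difficulty of the section has already been absorbed into establishing Claims 1--3, so this claim is essentially a bookkeeping corollary of them.
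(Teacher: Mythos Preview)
Your argument is correct and follows essentially the same route as the paper: assume both edges are missing, extract the two degree-sum lower bounds from Claim~2 and the two degree-sum upper bounds from Claim~3, and observe that adding each pair gives contradictory estimates for the same four-vertex total. One small remark: the integrality step you highlight is not actually needed, since the strict inequalities $d(v_{i-1})+d(v_{j-1})<n-1$ and $d(v_{i+1})+d(v_{j+1})<n-1$ already give $S<2n-2$, which directly contradicts $S\geq 2n-2$.
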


\begin{proof}
Assume the opposite. By Claim 2 we have $d(v_{i-1})+d(v_{i+1})\geq
n-1$ and $d(v_{j-1})+d(v_{j+1})\geq n-1$. By Claim 3, we have
$d(v_{i-1})+d(v_{j-1})<n-1$ and $d(v_{i+1})+d(v_{j+1})<n-1$, a
contradiction.
\end{proof}

Without loss of generality, we assume that $v_{i-1}v_{i+1}\in E(G)$.
Then the subgraph induced by $\{v_i,v_{i-1},v_{i+1},x_1\}$ is a
$Z_1$, a contradiction.

The proof is complete.

\section{Proof of Theorem 8}

If $G$ contains only one or two vertices, then the result is
trivially true. So we assume that $G$ contains at least three
vertices. We use $n$ to denote the order of $G$. We distinguish two
cases.

\setcounter{case}{0}
\begin{case}
$G$ is separable.
\end{case}

Let $x$ be a cut-vertex of $G$. By Lemma 1, $G-x$ has exactly two
components. Let $C$ and $D$ be the two components of $G-x$.

If there is a vertex in $D$ which is nonadjacent to $x$, then let
$z$ be a vertex in $D$ with distance 2 from $x$, and $y$ be a common
neighbor of $x$ and $z$. Let $w$ be a neighbor of $x$ in $C$. Then
$wxyz$ is an induced $P_4$ of $G$, a contradiction. Thus we have
that $x$ is adjacent to every vertex in $D$. By Lemma 1, for every
two vertices $y$ and $y'$ in $D$, $yy'\in\widetilde{E}_{-1}(G)$.
Similarly, $x$ is adjacent to every vertex in $C$ and for every two
vertices $w$ and $w'$ in $C$, $ww'\in\widetilde{E}_{-1}(G)$.

Let $V(C)=\{w_1,w_2,\ldots,w_k\}$ and $V(D)=\{y_1,y_2,\ldots,y_l\}$,
where $k+l+1=n$. Then $P'=w_1w_2\cdots w_kxy_1y_2\cdots y_l$ is an
$o_{-1}$-path of $G$. By Lemma 2, there is a path $P$ containing all
the vertices in $P'$, which is a Hamilton path.

\begin{case}
$G$ is 2-connected.
\end{case}

Let $P=v_1v_2\cdots v_p$ be a longest path of $G$. Assume that $G$
is not traceable. Then $V(G)\backslash V(P)\neq \emptyset$. Since
$G$ is 2-connected, there exists a path $R$ with two end-vertices in
$P$ and of length at least 2 which is internally disjoint with $P$.
Let $R=x_0x_1x_2\cdots x_{r+1}$, where $x_0=v_i$ and $x_{r+1}=v_j$.
Clearly $i\neq 1,p$ and $j\neq 1,p$. Without loss of generality, we
assume that $2\leq i<j\leq p-1$.

Similar as in Section 5, we can prove that

\setcounter{claim}{0}
\begin{claim}
Let $x\in V(R)\setminus\{v_i,v_j\}$ and $y\in
\{v_{i-1},v_{i+1},v_{j-1},v_{j+1}\}$. Then $xy\notin
\widetilde{E}_{-1}(G)$.
\end{claim}

\begin{claim}
$v_{i-1}v_{i+1}\in\widetilde{E}_{-1}(G)$,
$v_{j-1}v_{j+1}\in\widetilde{E}_{-1}(G)$.
\end{claim}

Now we prove that

\begin{claim}
$v_iv_{j-1}\notin E(G)$.
\end{claim}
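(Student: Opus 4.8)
The plan is to argue by contradiction: suppose $v_iv_{j-1}\in E(G)$ and produce an $o_{-1}$-path of $G$ containing every vertex of $P$ together with all internal vertices of $R$. Since $R$ has length at least $2$, it has an internal vertex, so such an $o_{-1}$-path would, by Lemma 2, yield an ordinary path on more vertices than $P$, contradicting the choice of $P$ as a longest path. Thus $v_iv_{j-1}\notin E(G)$, as claimed.

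The idea behind the construction is to defer the visit to $v_i$: instead of meeting $v_i$ in its original position on $P$, I bypass it early using the $o_{-1}$-edge $v_{i-1}v_{i+1}$ supplied by Claim 2, and only reach $v_i$ later through the assumed chord $v_iv_{j-1}$, immediately after which the internal vertices of $R$ can be spliced in. Concretely, traverse $P[v_1,v_{i-1}]$, jump across $v_{i-1}v_{i+1}$, continue along $P[v_{i+1},v_{j-1}]$, cross the chord $v_{j-1}v_i$, run through $R$ from $v_i$ to $v_j$, and finish along $P[v_j,v_p]$; that is,
\[
P'=P[v_1,v_{i-1}]\,v_{i-1}v_{i+1}\,P[v_{i+1},v_{j-1}]\,v_{j-1}v_iRv_j\,P[v_j,v_p].
\]

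I would then check two things. First, that the listed vertices form a simple sequence exhausting $V(P)\cup V(R)$: reading them off gives $v_1,\dots,v_{i-1}$, then $v_{i+1},\dots,v_{j-1}$, then $v_i$, then the internal vertices $x_1,\dots,x_r$ of $R$, then $v_j,\dots,v_p$, each exactly once. Second, that every consecutive pair lies in $\widetilde{E}_{-1}(G)$: all consecutive pairs are edges of $P$ or of $R$ except $v_{i-1}v_{i+1}$, which lies in $\widetilde{E}_{-1}(G)$ by Claim 2, and the chord $v_{j-1}v_i$, which is an edge by assumption. Hence $P'$ is an $o_{-1}$-path whose vertex set properly contains $V(P)$, and Lemma 2 delivers the contradiction.

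The only real care is the index bookkeeping ensuring $P'$ is genuinely simple, together with the small-gap cases. All of $v_{i-1},v_{i+1},v_{j-1},v_j$ exist because $2\le i<j\le p-1$, and $R$ has an internal vertex because its length is at least $2$. The subpath $P[v_1,v_{i-1}]$ degenerates to the single vertex $v_1$ when $i=2$, and $P[v_{i+1},v_{j-1}]$ degenerates to the single vertex $v_{i+1}=v_{j-1}$ when $j=i+2$; in the latter case the ``chord'' $v_iv_{j-1}$ is just the path edge $v_iv_{i+1}$, so the very same computation in fact shows that this configuration cannot occur, leaving $j\ge i+3$ as the situation with genuine content. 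Confirming that the displayed sequence still reads as a valid $o_{-1}$-path in these boundary situations, rather than any subtle combinatorics, is the main thing to get right; the essential point is simply that the Claim 2 bridge lets us postpone $v_i$ and thereby attach $R$.
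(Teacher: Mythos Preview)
Your proof is correct and is essentially identical to the paper's: both argue by contradiction, build exactly the same $o_{-1}$-path
\[
P'=P[v_1,v_{i-1}]\,v_{i-1}v_{i+1}\,P[v_{i+1},v_{j-1}]\,v_{j-1}v_i\,R\,v_j\,P[v_j,v_p],
\]
and invoke the $o_{-1}$-path lemma to contradict the maximality of $P$. You supply more verification detail (simplicity, boundary indices, the degenerate case $j=i+2$) than the paper, but the argument is the same.
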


\begin{proof}
If $v_iv_{j-1} E(G)$, then
$P'=P[v_1,v_{i-1}]v_{i-1}v_{i+1}P[v_{i+1},v_{j-1}]v_{j-1}v_iRv_jP[v_j,v_p]$
is an $o_{-1}$-path containing all the vertices in $V(P)\cup V(R)$,
a contradiction.
\end{proof}

Let $v_k$ be the first vertex in $P[v_{i+1},v_{j-1}]$ which is
nonadjacent to $v_i$. We have that $i+2\leq k\leq j-1$.

\begin{claim}
$x_1v_{k-1}\notin E(G)$, $x_1v_k\notin E(G)$.
\end{claim}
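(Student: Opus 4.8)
The plan is to establish each of the two non-adjacencies by contradiction: in each case I assume the edge is present and build an $o_{-1}$-path that contains every vertex of $V(P)\cup\{x_1\}$. Since $R$ is internally disjoint from $P$ we have $x_1\notin V(P)$, so such an $o_{-1}$-path has $p+1$ vertices; Lemma 2 then converts it into an honest path on these $p+1$ vertices, which is longer than $P$ and contradicts the choice of $P$ as a longest path. The raw material for the rerouting is already at hand: by the choice of $v_k$ as the first vertex of $P[v_{i+1},v_{j-1}]$ nonadjacent to $v_i$, the vertex $v_i$ is adjacent to each of $v_{i+1},\dots,v_{k-1}$ (a fan of chords, in particular the chord $v_iv_{k-1}$); Claim 2 supplies $v_{i-1}v_{i+1}\in\widetilde{E}_{-1}(G)$, which lets me bypass $v_i$ near the start of the path; and $v_ix_1\in E(G)$ comes from the path $R$.

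For the part $x_1v_k\notin E(G)$, I would assume $x_1v_k\in E(G)$ and use the $o_{-1}$-path
\[
v_1\cdots v_{i-1}\,v_{i+1}v_{i+2}\cdots v_{k-1}\,v_i\,x_1\,v_k v_{k+1}\cdots v_p .
\]
Here the only non-edge step is $v_{i-1}v_{i+1}$, which lies in $\widetilde{E}_{-1}(G)$ by Claim 2; the step $v_{k-1}v_i$ is a chord, $v_ix_1$ is an edge of $R$, and $x_1v_k$ is the assumed edge. This sequence lists every $v_m$ exactly once (with $v_i$ moved to just after $v_{k-1}$) together with $x_1$, giving the desired $o_{-1}$-path on $V(P)\cup\{x_1\}$.

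For the part $x_1v_{k-1}\notin E(G)$, I first dispose of the degenerate case $k=i+2$: then $v_{k-1}=v_{i+1}$, and Claim 1 already yields $x_1v_{i+1}\notin\widetilde{E}_{-1}(G)$, hence $x_1v_{k-1}\notin E(G)$. For $k\ge i+3$ I assume $x_1v_{k-1}\in E(G)$ and use
\[
v_1\cdots v_{i-1}\,v_{i+1}v_{i+2}\cdots v_{k-2}\,v_i\,x_1\,v_{k-1}\,v_k v_{k+1}\cdots v_p ,
\]
where now $x_1$ is inserted between its two neighbors $v_i$ and $v_{k-1}$; again the single non-edge step $v_{i-1}v_{i+1}$ is covered by Claim 2, the step $v_{k-2}v_i$ is a chord (valid precisely because $k\ge i+3$, so that $v_{k-2}\in\{v_{i+1},\dots,v_{k-1}\}$), and $v_{k-1}v_k$ is an edge of $P$. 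The step I expect to require the most care is exactly this boundary bookkeeping: confirming that each displayed sequence is a genuine $o_{-1}$-path (every consecutive pair is an edge or lies in $\widetilde{E}_{-1}(G)$) and that it visits each vertex of $V(P)\cup\{x_1\}$ exactly once, including the small-$k$ degeneracies where a segment such as $v_{i+1}\cdots v_{k-1}$ collapses to a single vertex. The fan of chords joining $v_i$ to $v_{i+1},\dots,v_{k-1}$, together with the $\widetilde{E}_{-1}$-edge $v_{i-1}v_{i+1}$ from Claim 2, is what makes both reroutings possible.
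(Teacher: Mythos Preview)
Your argument is correct and is essentially identical to the paper's own proof: the same case split on $v_{k-1}=v_{i+1}$ via Claim~1, and for the remaining cases the same two rerouted $o_{-1}$-paths built from the $\widetilde{E}_{-1}$-edge $v_{i-1}v_{i+1}$ of Claim~2, the chord $v_iv_{k-1}$ (resp.\ $v_iv_{k-2}$), the edge $v_ix_1$ of $R$, and the assumed edge. If anything, your bookkeeping is slightly cleaner than the paper's, which states that the constructed $o_{-1}$-path contains all of $V(P)\cup V(R)$ when in fact it only picks up $x_1$ from the interior of $R$; your version with $V(P)\cup\{x_1\}$ is the accurate statement and already suffices for the contradiction.
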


\begin{proof}
If $v_{k-1}=v_{i+1}$, then by Claim 1, we have $x_1v_{i+1}\notin
E(G)$. If $i+2\leq k-1\leq j-2$ and $x_1v_{k-1}\in E(G)$, then
$P'=P[v_1,v_{i-1}]v_{i-1}v_{i+1}P[v_{i+1},v_{k-2}]v_{k-2}v_ix_1v_{k-1}P[v_{k-1},v_p]$
is an $o_{-1}$-path containing all the vertices in $V(P)\cup V(R)$,
a contradiction. Thus we have that $x_1v_{k-1}\notin E(G)$.

If $z_1v_k\in E(G)$, then
$P'=P[v_1,v_{i-1}]v_{i-1}v_{i+1}P[v_{i+1},v_{k-1}]v_{k-1}v_ix_1v_kP[v_k,v_p]$
is an $o_{-1}$-path containing all the vertices in $V(P)\cup V(R)$,
a contradiction. Thus we have that $x_1v_k\notin E(G)$.
\end{proof}

Thus $x_1v_iv_{k-1}v_k$ is an induced $P_4$, a contradiction.

The proof is complete.

\section{Remark}

Here we explain why we use the concept $o_{-1}$-heavy. In fact one
can similarly define $o_r$-heavy subgraphs for an integer $r$.

Let $G$ be a graph on $n$ vertices, $G'$ be an induced subgraph of
$G$ and $r$ be a given integer. We say that $G'$ is
$o_r$-\emph{heavy} if there are two nonadjacent vertices in $V(G')$
with degree sum at least $n+r$. For a given graph $H$, the graph $G$
is called $H$-$o_r$-\emph{heavy} if every induced subgraph of $G$
isomorphic to $H$ is $o_r$-heavy. For a family $\mathcal{H}$ of
graphs, $G$ is called $\mathcal{H}$-$o_r$-\emph{heavy} if $G$ is
$H$-$o_r$-heavy for every $H\in\mathcal{H}$. Clearly, an $H$-free
graph is $H$-$o_r$-heavy for any integer $r$; and if $r\leq s$, then
an $H$-$o_s$-heavy graph is also $H$-$o_r$-heavy.

Consider the bipartite graph $K_{k,k+2}$. Note that every subgraph
of $K_{k,k+2}$ (other than $K_1$ and $K_2$) is $o_{-2}$-heavy and
$K_{k,k+2}$ is non-traceable. Thus, for any class $\mathcal {H}$ of
graphs, a connected $\mathcal{H}$-$o_r$-heavy graph is not
necessarily traceable for $r\leq -2$.

Now we consider the $o_r$-heavy subgraph conditions for $r\geq 0$.
We will show that we cannot get any other pares of subgraphs solving
our problem in Theorems 6 and 9.

\begin{theorem}
Let $r\geq 0$ be an integer. Let $R$ and $S$ be connected graphs
with $R,S\neq P_3$ and let $G$ be a connected graph. Then $G$ being
$\{R,S\}$-$o_r$-heavy implies $G$ is traceable if and only if (up to
symmetry) $R=K_{1,3}$ and $S=C_3$.
\end{theorem}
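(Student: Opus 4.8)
The plan is to split the equivalence and reduce each direction to a result already in hand.

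Sufficiency is almost immediate. If $R=K_{1,3}$, $S=C_3$ and $G$ is $\{K_{1,3},C_3\}$-$o_r$-heavy, then since $-1\le r$ the monotonicity noted above (an $H$-$o_s$-heavy graph is $H$-$o_r$-heavy whenever $r\le s$) shows that $G$ is $\{K_{1,3},C_3\}$-$o_{-1}$-heavy; Theorem 6 then gives that $G$ is traceable. So nothing new is needed here.

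For necessity I would first cut the list of candidates and then kill the survivors with explicit non-traceable graphs. For the first step, observe that an $\{R,S\}$-free graph is vacuously $\{R,S\}$-$o_r$-heavy, so the hypothesis forces every connected $\{R,S\}$-free graph to be traceable; by the Faudree--Gould Theorem (Theorem 4) this already pins $R=K_{1,3}$ and $S\in\{C_3,P_4,Z_1,B,N\}$. To remove $P_4,Z_1,B,N$ I would use the same induced-subgraph monotonicity that the paper records for free and heavy graphs, now for $o_r$-heaviness: if $H_1$ is an induced subgraph of $H_2$, then an $H_1$-$o_r$-heavy graph is $H_2$-$o_r$-heavy, because the heavy nonadjacent pair witnessing the induced $H_1$ sits inside the $H_2$. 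Since $P_4$ and $Z_1$ are each induced subgraphs of both $B$ and $N$, it suffices to exhibit, for the prescribed $r$, non-traceable connected graphs that are $\{K_{1,3},P_4\}$-$o_r$-heavy and $\{K_{1,3},Z_1\}$-$o_r$-heavy respectively.

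These are the graphs $G_1$ and $G_2$ of Section 3, reused with their parameters enlarged so that the threshold becomes $n+r$ rather than $n-1$. Both stay non-traceable for the sturdy reason that they contain $k\ge 3$ vertices of degree $1$ (the $x'_i$ in $G_1$, the pendants $x,y,z$ in $G_2$), and a path has only two ends. The work is to check that every induced claw and every induced $P_4$ (respectively $Z_1$) still carries a nonadjacent pair of degree sum at least $n+r$. In $G_1$ each induced claw uses three of the $x_i$ as leaves and each induced $P_4$ has the shape $x'_ix_iqx_j$; in both cases the pair $\{x_i,x_j\}$ has degree sum $2(n-2k+1)$, which is at least $n+r$ once $n\ge 4k-2+r$. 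In $G_2$ every induced claw again contains a pair of spoke-vertices of degree sum $2(n-3k-5)$, large once $n\ge 6k+10+r$, while the decisive induced $Z_1$ is $\{q,q',x_i,x'\}$, whose only nonadjacent pair $\{x',q\}$ has degree sum $n+k-6$. For $r=-1$ these thresholds recover the bounds $n\ge 4k-3$ and $k\ge 5,\ n\ge 6k+9$ already used in Fig.~2, a useful consistency check.

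The main obstacle is exactly the arithmetic of that last step, and in particular a mismatch of scales in $G_2$: the claw pairs become heavy once the clique (hence $n$) is large, but the critical $Z_1$ pair $\{x',q\}$ has weight $n+k-6$ governed by the number of spokes $k$, not by the clique order. So the parameters must be raised in the right order --- first fix $k\ge r+6$, then take $n\ge 6k+10+r$ --- and one must also confirm that enlarging $k$ and $n$ introduces no new induced claw, $P_4$ or $Z_1$ whose sole nonadjacent pairs run through a degree-$1$ vertex. The structural descriptions above are meant to show that each such induced subgraph always retains one heavy pair of spoke-vertices, so no fresh obstruction is created; making that verification exhaustive is where the care lies.
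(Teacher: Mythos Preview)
Your proposal is correct and follows essentially the same route as the paper: one direction reduces to Theorem~6 via the monotonicity of $o_r$-heaviness in $r$, and the other direction reuses the graphs $G_1$ and $G_2$ of Fig.~2 with the parameters enlarged to $n\ge 4k+r-2$ (for $G_1$) and $k\ge r+6$, $n\ge 6k+r+10$ (for $G_2$), exactly as the paper does in Section~7. Your explicit degree computations and your use of Theorem~4 to cut the candidate list match the intended argument; note only that the paper has the labels ``necessity'' and ``sufficiency'' interchanged in Section~7, so do not be thrown by that when comparing.
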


\begin{theorem}
Let $r\geq 0$ be an integer. Let $S$ be a connected graph with
$S\neq P_3$ and let $G$ be a connected claw-$o_r$-heavy graph. Then
$G$ being $S$-free implies $G$ is traceable if and only if
$S=C_3,Z_1$ or $P_4$.
\end{theorem}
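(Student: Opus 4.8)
The plan is to prove the two implications separately, reducing each to material already in hand for the $o_{-1}$-heavy case. For the sufficiency (the ``if'' direction) I would invoke monotonicity: since $r\geq 0$ we have $-1\leq r$, so by the observation that an $H$-$o_s$-heavy graph is $H$-$o_r$-heavy whenever $r\leq s$, every connected claw-$o_r$-heavy graph is also claw-$o_{-1}$-heavy. Hence if $G$ is connected, claw-$o_r$-heavy and $S$-free with $S\in\{C_3,Z_1,P_4\}$, then $G$ is connected, claw-$o_{-1}$-heavy and $S$-free, and Theorem 9 yields that $G$ is traceable. This settles the ``if'' direction with no further work.

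For the necessity I would proceed in two steps. First I would pin down the list of candidates for $S$. A claw-free graph is vacuously claw-$o_r$-heavy for every $r$, so if $S$ (connected, $S\neq P_3$) has the property that every connected claw-$o_r$-heavy $S$-free graph is traceable, then in particular every connected $\{K_{1,3},S\}$-free graph is traceable. By Theorem 4 this forces $S\in\{C_3,P_4,Z_1,B,N\}$, and it remains only to rule out $B$ and $N$.

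Second, I would rule out $B$ and $N$ by reusing the graph $G_1$ of Section 3. As a fixed graph depending only on $n$ and $k$, it retains the $k\geq 3$ degree-one vertices $x'_1,x'_2,x'_3$ and is therefore non-traceable, and it is both $B$-free and $N$-free: every triangle of $G_1$ contains at least two vertices of the clique $K_{n-2k}$, and every neighbor of a clique vertex is adjacent to all other clique vertices, so no clique vertex in a triangle has a private pendant, ruling out any induced bull or net. The only feature that changes relative to the $o_{-1}$-heavy setting is the degree-sum requirement. Since the only induced claws of $G_1$ are centered at a vertex of $K_{n-2k}$ with three of the $x_i$'s as end-vertices, and the only nonadjacent pairs there are pairs $x_i,x_j$ with $d(x_i)+d(x_j)=2(n-2k+1)$, the graph $G_1$ is claw-$o_r$-heavy exactly when $2(n-2k+1)\geq n+r$, i.e. $n\geq 4k+r-2$. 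Choosing $n$ at least this large (for fixed $k\geq 3$ and the given $r$) yields, for every $r\geq 0$, a connected, non-traceable, claw-$o_r$-heavy graph that is simultaneously $B$-free and $N$-free; hence neither $B$ nor $N$ can serve as $S$, and therefore $S\in\{C_3,Z_1,P_4\}$.

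The hard part here is not conceptual but lies in the bookkeeping of the second step: one must check carefully that enlarging $n$ relative to $k$ and $r$ creates no new induced claws, so that the bound $2(n-2k+1)\geq n+r$ genuinely certifies claw-$o_r$-heaviness, while leaving non-traceability and the absence of induced bulls and nets untouched. Because each $x_i$ has degree growing like $n$ while $k$ and $r$ stay fixed, the inequality $2(n-2k+1)\geq n+r$ holds for all sufficiently large $n$, so a single family of constructions adapts uniformly across all $r\geq 0$; the companion pair-version preceding it is handled in the same way using both $G_1$ and $G_2$.
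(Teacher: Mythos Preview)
Your argument is correct and follows essentially the same route as the paper: the ``if'' direction is obtained by monotonicity from Theorem~9, and the ``only if'' direction first restricts to the Faudree--Gould list via Theorem~4 and then eliminates $B$ and $N$ using the graph $G_1$ with the adjusted parameter range $n\geq 4k+r-2$. Your write-up in fact supplies more detail than the paper does (the explicit verification of the claw structure of $G_1$, the degree-sum computation, and the reason $G_1$ is $B$- and $N$-free), and you correctly observe that for Theorem~11 only $G_1$ is needed, whereas the paper treats Theorems~10 and~11 jointly and therefore also invokes $G_2$.
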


The necessity of these two theorems is deduced by Theorems 6 and 9
immediately. Here we show the sufficiency of them. In Fig. 2, take
$r\geq 3$ and $n\geq 4k+r-2$ in $G_1$, and take $k\geq r+6$ and
$n\geq 6k+r+10$ in $G_2$. Then $G_1$ is
$\{K_{1,3},P_4\}$-$o_r$-heavy and $G_2$ is
$\{K_{1,3},Z_1\}$-$o_r$-heavy. Since the two graphs are both
non-traceable, we get the sufficiency of Theorems 10 and 11.

\end{document}